\patchcmd{\subequations}{}%
{}{}{}
\let\orgdescriptionlabel\descriptionlabel
\renewcommand*{\descriptionlabel}[1]{%
  \let\orglabel\label
  \let\label\@gobble
  \phantomsection
  \edef\@currentlabel{#1}%
  \let\label\orglabel
  \orgdescriptionlabel{#1}%
}
\DeclareMathOperator{\spn}{span}
\newcommand{\abs}[1]{\left\lvert #1\right\rvert}
\newcommand{\OV}{\operatorname{\mathsf{V}}}
\newcommand{\OK}{\operatorname{\mathsf{K}}}
\newcommand{\OW}{\operatorname{\mathsf{W}}}
\newcommand{\Id}{\operatorname{\mathsf{Id}}}
\newcommand{\calH}{\mathcal{H}}
\newcommand{\calS}{\mathcal{S}}
\newcommand{\Vx}{\mathbf{x}}
\newcommand{\Vn}{\mathbf{n}}
\newcommand{\R}{\mathbb{R}}
\newcommand{\N}{\mathbb{N}}
\newcommand{\dual}[2]{\left\langle #1\,,\,#2\right\rangle}
\newcommand{\inner}[2]{\left( #1\,,\,#2\right)}
\newcommand{\norm}[2]{\left\lVert #1\right\rVert_{#2}}
\definecolor{orange}{rgb}{1,0.4,0}
\definecolor{green}{rgb}{0,0.4,0}
\newcommand{\sgn}[1]{\text{sign}\left( #1 \right)}
\newtheorem{theorem}{Theorem}[section]
\newtheorem{lemma}[theorem]{Lemma}
\newtheorem{proposition}[theorem]{Proposition}
\newtheorem{corollary}[theorem]{Corollary}
\newtheorem{remark}[theorem]{Remark}
\newtheorem{definition}[theorem]{Definition}
\newtheorem{assumption}[theorem]{Assumption}
\title{Towards Stable Second-Kind Boundary Integral Equations for Transient Wave Problems}
\author{D.~Hoonhout$^1$, C.~Urz\'ua-Torres$^1$\thanks{The research leading to this 
publication received funding from the Dutch Research 
Council (NWO) under the NWO-Talent Programme Veni with the project number 
VI.Veni.212.253.}}
\date{
{\small $^1$Delft Institute of Applied Mathematics, TU Delft, \\
Mekelweg 4, 2628CD Delft, The Netherlands }\\[3mm]
}
\begin{document}
\maketitle

\begin{abstract}
  In this paper, we discuss the stable discretisation of the double layer 
  boundary integral operator for the wave equation in $1d$. For this, we show that the 
  boundary integral formulation is $L^2$-elliptic and also inf-sup stable in 
  standard energy spaces. This turns out to be a particular case of a recent 
  result on the inf-sup stability of boundary integral operators for the wave 
  equation and contributes to its further understanding. 
  Moreover, we present the first BEM discretisations of second-kind operators for 
  the wave equation for which stability is guaranteed and a complete numerical 
  analysis is offered. We validate our theoretical findings with numerical 
  experiments. 
\end{abstract}

%

\section{Introduction}
\label{sec:intro}

Time-domain boundary integral equations (BIEs) and boundary element methods 
(BEM) for the transient wave equation are well-established in the literature. 
We refer 
to \cite{ADG09, ADG08, BHD86, CoS17, GNS17, HQS17, JoR17, PoS21, Say13, Say16} 
and the references therein, to mention a few. However, in spite of their broad 
use and success, its numerical analysis is still incomplete, as pointed out in 
\cite{CoS17, JoR17}. 

Most of the current literature is based on the groundbreaking work of Bamberger 
and Ha-Duong \cite{BHD86}, where they proposed a time-dependent variational 
formulation for transient wave problems for which existence and uniqueness is 
shown using the Laplace transform. Their results established ellipticity and 
continuity in different weighted Sobolev spaces. 
This so-called \emph{norm gap} prevents the use of standard numerical analysis 
tools such as the Lax-Milgram theorem.

Later, Aimi et al. introduced a different formulation \cite{ADG08}, usually 
referred to as \emph{Energetic BEM}, which is elliptic and continuous in $L^2$ 
for one dimensional problems, thus providing a complete error and stability 
analysis in $1d$. Numerical evidence also supports their applicability in $2d$. 
Nonetheless, to the best of the authors' knowledge, no numerical analysis of 
this method is available for higher dimensions. In addition, energetic BEM for 
Neumann problems requires higher regularity on the ansatz space than the 
expected one (which is a drawback for overly perfectionist mathematicians).

More recently, Steinbach and Urz\'ua-Torres developed a new space-time 
formulation \cite{StU22} that does not suffer from a norm-gap and for which 
existence and uniqueness is proven with the expected regularity in all 
dimensions. In order to achieve this, they had to work in a more general 
framework. Instead of ellipticity, they showed inf-sup stability of the BIEs in 
a new family of trace spaces, which preserve the expected regularity in space 
and time. Furthermore, in joint work with Zank \cite{SUZ21}, 
they regularised their formulation by means of the \emph{Modified Hilbert 
transform}. The resulting variational form is proven to be elliptic in the 
natural energy spaces and can also be seen as a generalisation of the energetic 
BEM approach. This last result is currently only available in $1d$.

There is still a lot to comprehend about the new framework introduced by
Steinbach and Urz\'ua-Torres. In this paper, we aim to understand this further
and work with the second-kind formulation from \cite{StU22}. Moreover, as a 
first step in this direction, we consider only one spatial variable and
propose two stable space-time Galerkin discretisations for the second-kind 
BIE that corresponds to the transient wave equation with prescribed Dirichlet 
data in $1d$. For this, we develop the analysis both in $L^2$, and in the 
natural trace space $H^{1/2}_{0,}(\Sigma)$ and its dual space $[H^{1/2}_{0,}(
\Sigma)]^\prime$. Moreover, we show that the corresponding second-kind 
formulations have a unique solution when considering these spaces. Finally, we 
prove that the proposed discretisation is stable under certain mesh conditions, 
that agree with the ones found in \cite{HLS23}.

The introduced approach distinguishes itself from existing methods in $1d$ in a variety 
of ways: (i) The regularity assumption on the Dirichlet data is weakened, when 
compared with energetic BEM; (ii) It does \emph{not} require stabilisation by 
introducing operators such as the Modified Hilbert transform, and thus has a 
simpler implementation than \cite{SUZ21}; (iii) Numerical results show that it 
works for all meshes where the $L^2$-orthogonal projection is $H^1$-stable, and 
thus retains the potential use of space-time adaptive algorithms and 
parallelisation. 

The novelty and appeal of this approach relies on its simplicity and on the 
fact that it offers the first standalone stable second-kind formulation for the 
wave equation, which benefits from the well-conditioning of this type of BIEs. 
Although this paper focuses on $1d$
and the analysis we provide cannot be extended directly, the message we hope 
to convey is that there is no reason to believe that second kind formulations 
for the wave equation are hopeless. Moreover, they may actually offer some 
computational advantages.

Indeed, the literature regarding BEM for transient wave problems mainly focuses
on \emph{first-kind} formulations and its discretisation. The review article by 
Costabel and Sayas \cite{CoS17} gives an extensive overview of the currently 
available approaches. Examples of these works include 
\cite{GNS17,GOS19,GOS20,GOS18,PoS21,Say13}. 
Some of the few cases that examine \emph{second-kind} formulations are 
\cite{HaD87}, and \cite{EGH16, BGH20} where combined field approaches are 
required for stability.

This paper is structured as follows: In Section~\ref{sec:prelim}, we briefly 
introduce the required mathematical framework. Then we show in 
Section~\ref{sec:2ndKF} that the bilinear forms for the second-kind variational 
formulation proposed in \cite{StU22} are isomorphisms both in the natural energy 
space and even elliptic in $L^2$ when restricted to one spatial-dimension. 
Section~\ref{sec:dis} is dedicated to present the two related discretisations, 
namely, using piecewise constant basis functions as test and trial spaces, and using 
piecewise linear basis functions with zero initial condition. We complete the 
Section by showing discrete inf-sup stability for the latter. In Section~\ref{sec:EE}, 
we proceed to derive the corresponding error estimates.
Finally, we provide numerical experiments in Section~\ref{sec:numexp} to 
validate our theoretical claims.

\section{Preliminaries}
\label{sec:prelim}

\subsection{Model problem}
\label{ssec:modelprob}
Let $\Omega$ be an interval $\Omega = (0,L)\subset \R$, with boundary $\Gamma = 
\lbrace 0 , L \rbrace$. For a finite time interval $(0,T)$, 
we define the interior \emph{space-time cylinder} $Q := \Omega \times (0,T) \, 
\subset \, \R^{2}$, and its lateral boundary $\Sigma := \Gamma \times [0,T]$. 
In an analogous fashion, the exterior space-time cylinder is denoted by 
$Q_{ext}:= (\R\setminus\overline{\Omega})\times (0, T )$.

We denote the D'Alembert operator by $\Box := \partial_{tt} - \Delta_x$,
and write the \emph{interior Dirichlet initial boundary value problem
for the wave equation} as
\begin{equation}
\label{eq:IBVP}
\begin{array}{rclcl}
\Box u(x,t) & = & f(x,t) & & \text{for} \; (x,t) \in Q,  \\
u(x,t) & = & g(x,t) & & \text{for} \; (x,t) \in \Sigma,\\
u(x,0) = \partial_t u(x,t)_{|t=0} & = & 0 & & \text{for} \; x \in \Omega,
\end{array}
\end{equation}
and the exterior Dirichlet initial boundary value problem for the wave equation
\begin{equation}
\label{eq:IBVP_exterior}
\begin{array}{rclcl}
\Box u(x,t) & = & f(x,t) & & \text{for} \; (x,t) \in Q_{ext},  \\
u(x,t) & = & g(x,t) & & \text{for} \; (x,t) \in \Sigma,\\
u(x,0) = \partial_t u(x,t)_{|t=0} & = & 0 & & \text{for} \; x \in \Omega.
\end{array}
\end{equation}

The fundamental solution of the wave equation in $1d$ is 
$G(x,t) =  \dfrac{1}{2} \, \mathsf{H}(t-\abs{x})$,
with $\mathsf{H}$ the Heaviside step function.\footnote{While $G(x,t) = 
\dfrac{1}{2\pi}\,\dfrac{\mathsf{H}(t-\abs{x})}{\sqrt{t^2-\abs{x}^2}}$ in $2d$, 
and $G(x,t) = \dfrac{1}{4\pi} \, \dfrac{\delta(t - \abs{x})}{\abs{x}}$ in $3d$.}

Let $\mathscr{D}$ denote the \emph{double layer potential}, defined as usual as
\begin{align}
\label{eq:DL}
  (\mathscr{D}z)(x,t) &:= \int_0^t \int_\Gamma
 \partial_{n_y}  G(x-y,t-\tau) \, z(y,\tau) \, ds_y \, d\tau,
\end{align}
for $(x,t) \in Q$ and a regular enough density $z$.

We can represent the solution $u$ of \eqref{eq:IBVP} by using the double layer 
potential \cite[Ch.~1]{Say16}. Hence, we solve the following boundary integral 
equation (BIE) of the second-kind 
\begin{align}
\label{eq:BIEK}
 \left(-\frac{1}{2}\Id + \OK \right)z =  g,
\end{align}
where $\Id$ is the identity and $\OK$ the \emph{double layer operator}, which 
will be defined in Section \ref{ssec:BIO}. Similarly, we solve 
\cite[Sec.~4]{Cos94}
\begin{align}
\label{eq:BIEKex}
 \left(\frac{1}{2}\Id + \OK \right)z =  g,
\end{align}
when considering the exterior problem \eqref{eq:IBVP_exterior}. We dedicate the 
next section to introduce the mathematical framework required to properly state 
these BIEs.

\subsection{Mathematical framework for $1d$}
\label{ssec:notation1d}
Let ${\mathcal{O}} \subseteq \mathbb{R}^m, m=1,2$. We stick to the usual 
notation for the space $L^2({\mathcal{O}})$ of Lebesgue square integrable 
functions; and the Sobolev spaces $H^s({\mathcal{O}})$. Moreover, inner 
products of Hilbert spaces $X$ are denoted by standard brackets $(\cdot,\cdot
)_X$, while angular brackets $\dual{\cdot}{\cdot}_{\mathcal{O}}$ are used for 
the duality pairing induced by the extension of the inner product $(\cdot, \cdot
)_{L^2({\mathcal{O}})}$. For a Hilbert space $X$ we denote by $X'$ its dual with 
duality pairing $\dual{\cdot}{\cdot}_{\mathcal{O}}$.

We consider the spaces
\begin{eqnarray*}
  H^1_{0,}(0,T;L^2(\Omega))
  & := & \Big \{
  v \in L^2(Q) : \partial_t v \in L^2(Q), \; v(x,0)=0 \quad \mbox{for} \;
  x \in \Omega \Big \}, \\
  H^1_{,0}(0,T;L^2(\Omega))
  & := & \Big \{
  v \in L^2(Q) : \partial_t v \in L^2(Q), \; v(x,T)=0 \quad \mbox{for} \;
  x \in \Omega \Big \} .
\end{eqnarray*}
With these, we introduce
\begin{eqnarray*}
  H^{1}_{;0,}(Q)
  & := & L^2(0,T;H^1(\Omega)) \cap H^1_{0,}(0,T;L^2(\Omega)), 
  \end{eqnarray*}
which has zero initial values at $t=0$; and
 \begin{eqnarray*} 
  H^{1}_{;,0}(Q)
  & := & L^2(0,T;H^1(\Omega)) \cap H^1_{,0}(0,T;L^2(\Omega)),
\end{eqnarray*}
which prescribes zero final values at $t=T$. We remark that these are Hilbert 
spaces with norms $\norm{ u }{H^{1}_{;0,}(Q)} := \abs{u}_{H^1(Q)}$ and 
$\norm{ v }{H^{1}_{;,0}(Q)}:= \abs{u}_{H^1(Q)},$
where $\abs{u}_{H^1(Q)}:=\sqrt{\norm{ \partial_t u }{L^2(Q)}^2
         + \norm{ \nabla_x u }{L^2(Q)}^2}$.

We note that the space $L^2(\Sigma)$ boils down to
$L^2(\Sigma)=L^2(0,T)\times L^2(0,T)$ in $1d$. Furthermore, we consider the spaces
\begin{align}
    H^1_{0,}(\Sigma) 
    &:=\{ v =\begin{pmatrix}
        v_0 \\ v_L
    \end{pmatrix}: v_0, v_L \in H^1(0,T), v_0(0)=v_L(0)=0\},
\end{align}
and
\begin{align}
    H^1_{,0}(\Sigma) 
    &:=\{ v =\begin{pmatrix}
        v_0 \\ v_L
    \end{pmatrix}: v_0, v_L \in H^1(0,T), v_0(T)=v_L(T)=0\}.
\end{align}
Finally, we define $H^{1/2}_{0,}(\Sigma)$, and $H^{1/2}_{,0}(\Sigma)$ by 
interpolation as  \cite[Sect.~2.1.7]{SaS10}
\begin{align*}
  H^{1/2}_{0,}(\Sigma) :=
  [H^1_{0,}(\Sigma), L^2(\Sigma)]_{\theta=1/2}, \qquad
  H^{1/2}_{,0}(\Sigma) :=
  [H^1_{,0}(\Sigma), L^2(\Sigma)]_{\theta=1/2}.
\end{align*}
Then we have the 
following result:
\begin{lemma}[{\cite[Lemmas~3.1 and 3.2]{StU22}}]
\label{lemma:traceH10I}
  The mappings 
  \begin{align*}
   \gamma_{\Sigma}^i \, : \, H^{1}_{;0,}(Q) \to H^{1/2}_{0,}(\Sigma), \qquad
   \gamma_{\Sigma}^i \, : \, H^{1}_{;,0}(Q) \to H^{1/2}_{,0}(\Sigma),
  \end{align*}
are continuous and surjective.
\end{lemma}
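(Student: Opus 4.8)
The plan is to establish the trace results by combining the classical (elliptic/space-only) trace theorem with the tensor-product structure of the space-time cylinder and then use interpolation, exactly in the spirit of the definition of $H^{1/2}_{0,}(\Sigma)$. I will treat only the first mapping $\gamma_\Sigma^i : H^1_{;0,}(Q) \to H^{1/2}_{0,}(\Sigma)$; the second one is entirely analogous with the roles of $t=0$ and $t=T$ swapped.

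\medskip

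\noindent\textbf{Continuity.} First I would recall that $H^1_{;0,}(Q) = L^2(0,T;H^1(\Omega)) \cap H^1_{0,}(0,T;L^2(\Omega))$. Since in $1d$ the spatial trace $\gamma_\Gamma : H^1(\Omega) \to \R^2 \cong L^2(\Gamma)$ (evaluation at the two endpoints $\{0,L\}$) is continuous, applying it pointwise in $t$ gives a continuous map $L^2(0,T;H^1(\Omega)) \to L^2(0,T;L^2(\Gamma)) = L^2(\Sigma)$, with $\norm{\gamma_\Sigma^i u}{L^2(\Sigma)} \lesssim \norm{u}{L^2(0,T;H^1(\Omega))}$. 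Moreover, because the spatial trace commutes with $\partial_t$, the same map sends $H^1_{0,}(0,T;H^1(\Omega))$ continuously into $H^1_{0,}(0,T;L^2(\Gamma)) = H^1_{0,}(\Sigma)$, and it respects the vanishing initial condition $v(x,0)=0 \Rightarrow (\gamma_\Sigma^i u)(0) = 0$. By the definition of $H^{1/2}_{0,}(\Sigma)$ as the interpolation space $[H^1_{0,}(\Sigma),L^2(\Sigma)]_{1/2}$ and of $H^1_{;0,}(Q)$ as (essentially) the analogous intersection, interpolation of the two continuous mappings — $L^2(0,T;H^1(\Omega)) \to L^2(\Sigma)$ and $H^1_{0,}(0,T;H^1(\Omega)) \to H^1_{0,}(\Sigma)$ — yields continuity of $\gamma_\Sigma^i$ on the intermediate space, which contains $H^1_{;0,}(Q)$ with continuous embedding. (A small technical point: one checks that the intersection-space norm $\abs{u}_{H^1(Q)}$ controls both endpoint norms, so the interpolation argument applies to $H^1_{;0,}(Q)$ directly.)

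\medskip

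\noindent\textbf{Surjectivity.} For surjectivity I would construct a bounded right inverse (extension operator) $\mathcal{E} : H^{1/2}_{0,}(\Sigma) \to H^1_{;0,}(Q)$ with $\gamma_\Sigma^i \circ \mathcal{E} = \Id$. Again I would build it at the two endpoint levels and interpolate. At the $L^2$ level, extend $w \in L^2(\Sigma) = L^2(0,T;L^2(\Gamma))$ into $L^2(0,T;H^1(\Omega))$ by applying, for a.e.\ $t$, a fixed bounded spatial extension $E_\Gamma : L^2(\Gamma) = \R^2 \to H^1(\Omega)$ (e.g.\ the affine interpolant through the two boundary values). At the $H^1_{0,}$ level, the same pointwise-in-$x$ construction sends $H^1_{0,}(\Sigma)$ into $H^1_{0,}(0,T;H^1(\Omega))$, preserving the zero initial value. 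Interpolating, $E_\Gamma$ (acting pointwise in $t$) maps $H^{1/2}_{0,}(\Sigma) = [H^1_{0,}(\Sigma),L^2(\Sigma)]_{1/2}$ boundedly into the intermediate space $[H^1_{0,}(0,T;H^1(\Omega)), L^2(0,T;H^1(\Omega))]_{1/2}$, which one identifies with (a subspace of) $H^1_{;0,}(Q)$; and since $\gamma_\Sigma \circ E_\Gamma = \Id$ at both endpoints, it holds on the interpolated space. This gives a bounded right inverse, hence surjectivity.

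\medskip

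\noindent\textbf{Main obstacle.} The delicate point is the interpolation bookkeeping: one must justify that interpolating the intersection/graph spaces commutes with the tensor-product-in-time structure, i.e.\ that $[H^1_{0,}(0,T;X_1), L^2(0,T;X_0)]_{1/2}$ coincides (with equivalent norms) with the relevant subspace of $H^{1/2}$-in-time valued in the interpolation space $[X_1,X_0]_{1/2}$, and in particular that the zero-initial-condition constraint is compatible with the real (or complex) interpolation functor at $\theta=1/2$ — this is exactly the subtlety that makes $H^{1/2}_{00}$-type spaces appear and why the ``$0,$'' decoration matters. I would handle this by invoking the standard results on interpolation of vector-valued Sobolev spaces and of spaces with vanishing traces (e.g.\ \cite[Sect.~2.1.7]{SaS10} and Lions--Magenes-type theorems), noting that $\theta=1/2$ is the non-borderline case for the initial condition in $H^1(0,T)$ (since $H^1(0,T) \hookrightarrow C[0,T]$ and $1/2 > 0$ is safely away from the critical exponent where the trace stops being well-defined), so no $H^{1/2}_{00}$ pathology in time actually arises and the constraint interpolates cleanly. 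Everything else is the routine ``apply the classical spatial trace theorem in $1d$ pointwise in time'' argument.
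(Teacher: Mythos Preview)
The paper does not give its own proof: the lemma is quoted from \cite[Lemmas~3.1 and 3.2]{StU22} without argument. So the question is whether your sketch stands on its own, and there is a genuine gap that is more basic than the obstacle you flag. Your interpolation uses endpoints that \emph{both} carry $H^1(\Omega)$ spatial regularity, yielding the intermediate space $H^{1/2}_{0,}(0,T;H^1(\Omega))$. You then assert, for continuity, that $H^1_{;0,}(Q)$ embeds into this space, and for surjectivity, that this space embeds into $H^1_{;0,}(Q)$. Both inclusions are false. For the first: with $u_n(x,t)\sim\sin(nt)\cos(n\pi x/L)$ one has $|u_n|_{H^1(Q)}\sim n$ but $\|u_n\|_{H^{1/2}(0,T;H^1(\Omega))}\sim n^{3/2}$. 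For the second: your affine-in-$x$ extension of data $w\in H^{1/2}_{0,}(\Sigma)\setminus H^1_{0,}(\Sigma)$ has $\partial_t(\mathcal{E}w)\notin L^2(Q)$, so $\mathcal{E}w\notin H^1_{;0,}(Q)$. The point is that $H^1_{;0,}(Q)=L^2(0,T;H^1(\Omega))\cap H^1_{0,}(0,T;L^2(\Omega))$ trades spatial against temporal regularity; interpolating with the spatial space frozen at $H^1(\Omega)$ cannot capture this.

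A correct argument interpolates between $L^2(0,T;H^1(\Omega))$ and $H^1_{0,}(0,T;L^2(\Omega))$ (different spatial targets at the two endpoints), as in the Lions--Magenes anisotropic trace theory; or, since in $1d$ the cylinder $Q$ is a rectangle and $H^1_{;0,}(Q)\subset H^1(Q)$, one may invoke the standard isotropic $H^1$ trace theorem on $\partial Q$ and then treat the corner compatibility at $t=0$ to land in the interpolation space $H^{1/2}_{0,}(\Sigma)$. That corner compatibility is indeed the $H^{1/2}_{00}$-type subtlety you identify as the main obstacle, but it only becomes the main point once the underlying mapping is set up on the right pair of spaces.
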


\subsection{Layer potentials and boundary integral operators}
\label{ssec:BIO}

For the sake of clarity, we briefly recall the definition of the 
\emph{double layer potential} $\mathscr{D}$ from \eqref{eq:DL}: 
\begin{align*}
  (\mathscr{D}z)(x,t) &:= \int_0^t \int_\Gamma
 \partial_{n_y}  G(x-y,t-\tau) \, z(y,\tau) \, ds_y \, d\tau,
\end{align*}
for $(x,t) \in Q$ and $z$ a regular enough density.
Similarly, we also introduce the \emph{single layer potential} $\mathscr{S}$
\begin{align*}
 (\mathscr{S}w)(x,t) &:= \int_0^t \int_\Gamma G(x-y,t-\tau)
 \, w(y,\tau) \, ds_y \, d\tau,
\end{align*}
for $(x,t) \in Q$ and $w$ a regular enough density.

Let $\gamma_N^i$ be the interior Neumann trace, which is defined 
variationally as in \cite{Cos88}. Without loss of generality, let us consider 
$\Omega^c := B_R\setminus\overline{\Omega}$ and $Q^c:=\Omega^c \times (0,T)$,
with $B_R := \lbrace \Vx \in \R^n \, : \, \abs{\Vx}<R\rbrace$ a sufficiently 
large ball containing $\Gamma$. This allows us to define the exterior traces 
$\gamma_\Sigma^e$ and $\gamma_N^e$ in analogy to $\gamma_\Sigma^i$ and 
$\gamma_N^i$, but using $Q^c$ instead of $Q$. With this, we can define the BIOs 
in the classical way as \cite[Sec.~4]{Cos94}
 \begin{align*}
 \text{\it Weakly singular operator}: 
 && \OV w &:= \gamma_{\Sigma}^i \mathscr{S} w 
 = \gamma_{\Sigma}^e \mathscr{S} w; \\
\text{\it Double layer operator}: 
&& \OK z &:= \frac{1}{2}\left( \gamma_{\Sigma}^i \mathscr{D} z
 + \gamma_{\Sigma}^e \mathscr{D} z \right);\\
\text{\it Adjoint double layer operator}:
&&  \OK' w &:= \frac{1}{2}\left( \gamma_N^i \mathscr{S} w
 + \gamma_N^e \mathscr{S} w \right); \\
\text{\it Hypersingular operator}: 
&&   \OW z &:= -\gamma_N^i \mathscr{D} z
  = -\gamma_N^e \mathscr{D} z \, .
\end{align*}
As already pointed out, the double layer operator $\OK$ is the main character 
in this paper. Therefore, we discuss its properties in the 
next subsection\footnote{and refer the interested reader to \cite[Sec.~5]{StU22} 
for further details about the other operators}.

\section{Main Results}
\label{sec:2ndKF}

\subsection{Mapping induced by the double layer operator $\OK$}
\label{ssec:mapK}

We now focus on studying the effect of applying $\OK$ to a function 
in our trace space, as this will be needed to prove ellipticity later on. 

Let $v\in H^{s}_{0,}(\Sigma)$ with $s\in[0,1]$, in $1d$ we can express $\OK v$ 
as:
\begin{equation}
\label{eq:defDL}
\OK v (x,t):=  \frac{1}{2}\int_\Sigma \partial_{\Vn_y}\mathsf{H}(x-y,t-\tau)v(y,
\tau) ds_y d\tau, \quad (x,t) \in \Sigma,
\end{equation}
with $\partial_{\Vn_y}$ the normal derivative with respect to $y$.

Let $\Vn(x)$ denote the outward normal vector with respect to $\Gamma$ at $x$. 
Since $\Gamma:=\{0,L\}$, we have in this case $\Vn(0)=-1$ and $\Vn(L)=1$. Then, 
one can verify that 
\begin{equation}
\label{eq:normalderivativeH}
\partial_{\Vn_y}\mathsf{H}(x-y,t-\tau)= \Vn(y)\sgn{x-y}\delta(t-\tau-\abs{x-y}),
\end{equation}
where $\sgn{\cdot}$ denotes the signum function and $\delta(\cdot)$ the Dirac 
delta distribution. Combining \eqref{eq:defDL} and \eqref{eq:normalderivativeH}, 
we get
\begin{align}
\label{eq:expressionDL}
    \OK v(x,t) = &-\frac{1}{2}\int_0^T \sgn{x}\delta(t-\tau -\abs{x})v(0,\tau) 
    \;d\tau\nonumber \\
    &+ \frac{1}{2}\int_0^T \sgn{x-L}\delta(t-\tau-\abs{x-L})v(L,\tau) \; d\tau.
\end{align}

Since $\sgn{0}=0$, we can further reduce \eqref{eq:expressionDL} to
\begin{equation}
\label{eq:finalformK}
    \OK v(x,t) = \begin{cases} -\frac{1}{2}v(L,t-L) & x=0,\\
    -\frac{1}{2}v(0,t-L)& x=L.           
    \end{cases}, \quad \text{ for }v\in H^{s}_{0,}(\Sigma),\, s\in[0,1].
\end{equation}
\begin{remark}\label{rem:mapping_K}
    We gather from \eqref{eq:finalformK} that the application of $\OK$ to a 
    function $v$ simply translates $v$ in time and flips its position in space 
    by interchanging the evaluation on $x=0$ by $x=L$, and vice-versa. We will 
    exploit this when we prove ellipticity.
\end{remark}

\subsection{Continuity and ellipticity in $L^2(\Sigma)$}
\label{ssec:CEL2}

Using \eqref{eq:finalformK}, we have that:
\begin{equation*}
    \left(\pm\frac{1}{2}\Id+\OK\right) w(x,t) := \begin{cases} 
    \pm\frac{1}{2} w(0,t) - \frac{1}{2}w(L,t-L) &x=0\\
    \pm\frac{1}{2} w(L,t) - \frac{1}{2}w(0,t-L) & x=L.
    \end{cases}
\end{equation*}

Since we focus on second-kind formulations \eqref{eq:BIEK} and 
\eqref{eq:BIEKex}, we need to understand where these BIEs make sense. 
Let us begin with the scenario when the Dirichlet data $g$ is in $L^2(\Sigma)$.
Since we are interested in the Dirichlet problems \eqref{eq:IBVP}-
\eqref{eq:IBVP_exterior}, let us consider $w\in L^2(\Sigma)$ such that $w(\cdot,
t) = 0$ for all $t\leq 0$. Then, for $t\leq L$ we get
\begin{equation*}
    \norm{\left(\pm\frac{1}{2}\Id+\OK\right) w}{L^2(\Sigma)} \leq  \norm{\pm
    \frac{1}{2} w}{L^2(\Sigma)}\leq \frac{1}{2}\norm{w}{L^2(\Sigma)},
\end{equation*}
while for $t> L$
\begin{equation*}
 \norm{\left(\pm\frac{1}{2}\Id+\OK\right) w}{L^2(\Sigma)} \leq  \norm{\pm\frac{
 1}{2} w}{L^2(\Sigma)} + \norm{-\frac{1}{2}w(\cdot,t-L)}{L^2(\Sigma)} \leq 
 \norm{w}{L^2(\Sigma)}.
\end{equation*}

Hence, we have continuity of the operators $\left(\pm\frac{1}{2}\Id+\OK\right): 
L^2(\Sigma)\to L^2(\Sigma)$. 

To show ellipticity in $L^2(\Sigma)$, we will follow the proof of 
Theorem 2.1 in \cite{SUZ21}. For this, we introduce 
\begin{equation}\label{eq:time_slices}
n := \min \Big \{ m \in {\mathbb{N}} : T \leq m L \Big \},
\end{equation}
which is the number of time slices $T_j := ((j-1)L,j L)$ for $j =1,\ldots,n$
in the case $ T=nL$. In the case $ T < nL$, we define the last time slice
as $T_n:=((n-1)L,T)$, while all the others remain unchanged.

\vspace{1em}
\noindent\fbox{\parbox{0.975\textwidth}{
\begin{theorem}
\label{thm:L2ellip}
  For all $w \in L^2(\Sigma)$, there holds the ellipticity estimate
\begin{equation}\label{eq:L2Ell}
\abs{\inner{\left(\pm\frac{1}{2}\Id+\OK\right) w }{w}_{L^2(\Sigma)}} \, \geq \,
     c_s(n) \, \| w \|_{L^2(\Sigma)}^2,
\end{equation}
where the number $n \in {\mathbb{N}}$ of time slices is defined in 
\eqref{eq:time_slices}, and 
\begin{equation}
 c_s(n):= \sin^2 \left(\frac{\pi}{2(n+1)}\right) \quad n\geq 1.
\end{equation}
\end{theorem}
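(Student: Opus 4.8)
The plan is to run the time-slicing argument of \cite[Thm.~2.1]{SUZ21}, now for the second-kind operator $\pm\tfrac12\Id+\OK$. First I would use the explicit formula \eqref{eq:finalformK} to compute the bilinear form. Writing $w=\begin{pmatrix}w_0\\ w_L\end{pmatrix}$ with $w_0:=w(0,\cdot)$ and $w_L:=w(L,\cdot)$ in $L^2(0,T)$, extended by zero for $t<0$, one obtains
\[
  \inner{\left(\pm\tfrac12\Id+\OK\right)w}{w}_{L^2(\Sigma)}=\pm\tfrac12\,\norm{w}{L^2(\Sigma)}^2-\tfrac12\,C,\qquad C:=\int_0^T\big(w_L(t-L)\,w_0(t)+w_0(t-L)\,w_L(t)\big)\,dt.
\]
Since $1-2c_s(n)=1-2\sin^2\!\big(\tfrac{\pi}{2(n+1)}\big)=\cos\!\big(\tfrac{\pi}{n+1}\big)$ by the double-angle identity, the theorem reduces to the single estimate $\abs{C}\le\cos\!\big(\tfrac{\pi}{n+1}\big)\,\norm{w}{L^2(\Sigma)}^2$: the triangle inequality then gives $\big|\pm\tfrac12\norm{w}{L^2(\Sigma)}^2-\tfrac12 C\big|\ge\tfrac12\norm{w}{L^2(\Sigma)}^2-\tfrac12\abs{C}\ge c_s(n)\,\norm{w}{L^2(\Sigma)}^2$, which covers both sign choices at once.

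Next I would estimate $C$ on the time slices $T_j$. Put $\alpha_j:=\norm{w_0}{L^2(T_j)}$ and $\beta_j:=\norm{w_L}{L^2(T_j)}$, so that $\sum_{j=1}^n(\alpha_j^2+\beta_j^2)=\norm{w}{L^2(\Sigma)}^2$. The crucial observation is that a shift by $L$ maps $T_j$ onto $T_{j-1}$ for $j\ge2$ and maps $T_1$ into $\{t<0\}$, where $w\equiv0$; hence the restriction of $w_L(\cdot-L)$ to $T_j$ has $L^2(T_j)$-norm equal to $\beta_{j-1}$ (and $\le\beta_{n-1}$ on the possibly shorter last slice $T_n$ when $T<nL$, since one then integrates $w_L$ only over a subinterval of $T_{n-1}$). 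Cauchy--Schwarz slice by slice yields $\abs{C}\le\sum_{j=2}^n\big(\alpha_j\beta_{j-1}+\alpha_{j-1}\beta_j\big)=\alpha^\top(M+M^\top)\beta$, where $\alpha=(\alpha_1,\dots,\alpha_n)^\top$, $\beta=(\beta_1,\dots,\beta_n)^\top$ and $M\in\R^{n\times n}$ is the subdiagonal shift matrix. Now $M+M^\top$ is the adjacency matrix of the path graph on $n$ vertices, whose spectral radius is $2\cos\!\big(\tfrac{\pi}{n+1}\big)$, so $\alpha^\top(M+M^\top)\beta\le 2\cos\!\big(\tfrac{\pi}{n+1}\big)\,\abs{\alpha}\,\abs{\beta}\le\cos\!\big(\tfrac{\pi}{n+1}\big)\big(\abs{\alpha}^2+\abs{\beta}^2\big)$, which is exactly the required bound. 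The case $n=1$ is degenerate: the sum defining $C$ is empty, $C=0$, and the inequality holds with equality since $c_s(1)=\tfrac12$.

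The hard part is obtaining the \emph{sharp} constant rather than merely a positive one. The per-slice Cauchy--Schwarz is lossy, and a crude bound on the coupling matrix — say by its maximal absolute row sum, which equals $2$ — would only give $\abs{C}\le\norm{w}{L^2(\Sigma)}^2$ and hence the vacuous lower bound $0$; it is precisely the exact spectral radius $2\cos(\pi/(n+1))$ of the path-graph adjacency matrix that reproduces the factor $1-2c_s(n)=\cos(\pi/(n+1))$. A secondary, purely technical point is the bookkeeping for the truncated final slice $T_n=((n-1)L,T)$ when $T<nL$: one has to check that the corresponding shifted integral still lands inside $T_{n-1}$, so that the slice-wise estimate $\le\beta_{n-1}$ remains valid.
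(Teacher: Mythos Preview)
Your proof is correct and follows essentially the same route as the paper: both arguments time-slice, apply Cauchy--Schwarz on each slice to reduce the cross term to a bilinear form in the slice-norms $\alpha_j,\beta_j$, and then invoke the sharp spectral bound $2\cos\!\big(\tfrac{\pi}{n+1}\big)$ for the resulting tridiagonal coupling matrix (the paper delegates this last step to \cite[Thm.~2.1]{SUZ21}, whereas you make it explicit via the path-graph adjacency matrix). Your organisation is slightly cleaner in that isolating $C$ and bounding $|C|\le\cos\!\big(\tfrac{\pi}{n+1}\big)\,\|w\|^2$ handles both sign choices in a single stroke.
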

}}

\begin{proof}
When $t\leq L$, we have that
\begin{equation}
        \abs{\inner{\left(\pm\frac{1}{2}\Id+\OK\right) w}{w}_{L^2(\Sigma)}} = 
        \abs{\pm\frac{1}{2}\inner{ w}{w}_{L^2(\Sigma)}} 
        = \frac{1}{2}\norm{w}{L^2(\Sigma)}^2,
    \end{equation}
so the operator is $L^2(\Sigma)$- elliptic with constant $\dfrac{1}{2}$.

Let $T_j$ denote the $j-th$ time-slice, $j=1,\dots,n$. Clearly, $t> L$ implies 
that we assume $n>1$. Given $w \in L^2(\Sigma)$:
\begin{align*}
\abs{2\inner{\left(\pm\frac{1}{2}\Id+\OK\right) w}{w}_{L^2(\Sigma)}} &
= \abs{2\inner{\left(\frac{1}{2}\Id\pm\OK\right) w}{w}_{L^2(\Sigma)}}\\
&=\left|\norm{w(0,\cdot)}{L^2(\Sigma)}^2 +\norm{w(L,\cdot)}{L^2(\Sigma)}^2 \right.\\
&\hphantom{=|} \mp \int_{\Sigma} w(L,t)w(0,t-L)\; dt \\
&\hphantom{=|} \left.\mp  \int_{\Sigma} w(0,t)w(L,t-L)\; dt\right|.
\end{align*}
Now, we notice this is the same we had in the proof of Theorem 2.1 in 
\cite[page 4]{SUZ21}. We therefore use the same computations derived there to get 
the following: 
\begin{align*}    
&\abs{2\inner{\left(\frac{1}{2}\Id\pm\OK\right) w}{w}_{L^2(\Sigma)}}\\ 
&=\left|\sum_{j=1}^n\left[ \norm{w(0,\cdot)}{L^2(T_j)}^2 
+\norm{w(L,\cdot)}{L^2(T_j)}^2 \right.\right.\\
& \hphantom{=|}\mp \left.\left. \int_{T_j} w(L,t)w(0,t-L)\; dt 
\mp\int_{T_j} w(0,t)w(L,t-L)\; dt \right]\right|\\    
& \geq \left| \sum_{j=1}^n \left [  \norm{w(0,\cdot)}{L^2(T_j)}^2 
+\norm{w(L,\cdot)}{L^2(T_j)}^2 \right]\right. \\  
& \hphantom{=|}- \left.\sum_{j=2}^n \left[ \norm{w(0,\cdot)}{L^2(T_j)}
\norm{w(L,\cdot)}{L^2(T_{j-1})} +\norm{w(L,\cdot)}{L^2(T_j)}\norm{w(0,\cdot)}
{L^2(T_{j-1})} \right]\right| \\
& \geq \sum_{j=1}^n \left [  \norm{w(0,\cdot)}{L^2(T_j)}^2 +\norm{w(L,\cdot)}
{L^2(T_j)}^2 \right]\\
& \hphantom{=}- \sum_{j=2}^n \left[ \norm{w(0,\cdot)}{L^2(T_j)}\norm{w(L,\cdot)}
{L^2(T_{j-1})} +\norm{w(L,\cdot)}{L^2(T_j)}\norm{w(0,\cdot)}{L^2(T_{j-1})} \right]\\
&\geq 2\sin^2 \frac{\pi}{2(n+1)}\norm{w}{L^2(\Sigma)}^2,
\end{align*}
which shows the ellipticity of $\left(\pm\frac{1}{2}\Id+\OK\right)$ in $L^2(\Sigma)$ 
with ellipticity constant $c_s(n):= \sin^2\dfrac{\pi}{2(n+1)}$.\\
\end{proof}

By Lax-Milgram, the continuity and ellipticity in $L^2$ implies that 
\eqref{eq:BIEK} and \eqref{eq:BIEKex} have unique solutions. Moreover, by choosing 
a conforming discretisation, we will also get unique solvability of the arising 
linear system.

\subsection{Existence and uniqueness in $H^1_{0,}(\Sigma)$ and $H^{1/2}_{0,}
(\Sigma)$}
\label{ssec:CEH1}

We first turn our attention to the scenario when the Dirichlet data $g$ is in 
$H^1_{0,}(\Sigma)$, although ultimately, what we truly aim for is the case $g\in H^{1/2}_{0,}(\Sigma)$.
With this purpose in mind, we obtain the following result:

\vspace{1em}
\noindent\fbox{\parbox{0.975\textwidth}{
\begin{theorem}
\label{thm:K1dH1}
    In 1d, the operators  $\pm\frac{1}{2} \Id + \OK : H^{1}_{0,}(\Sigma) \to 
H^{1}_{0,}(\Sigma)$ are continuous and isomorphisms. 
\end{theorem}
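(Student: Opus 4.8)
The plan is to make everything explicit via the formula \eqref{eq:finalformK} for $\OK$ and then invert $\pm\frac12\Id+\OK$ directly. Write a boundary density as a pair $v=(v_0,v_L)$, $v_0=v(0,\cdot)$, $v_L=v(L,\cdot)$, and let $\tau_L$ denote the causal time-shift $(\tau_L f)(t):=f(t-L)$, with $f$ extended by zero for $t\le0$. Then \eqref{eq:finalformK} (cf.\ Remark~\ref{rem:mapping_K}) says that $\OK$ acts, up to the factor $-\frac12$, by interchanging the two components and translating them by $L$ in time, i.e.\ $2\OK(v_0,v_L)=-(\tau_L v_L,\tau_L v_0)$. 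Since $\frac12\Id+\OK=\frac12(\Id+2\OK)$ and $-\frac12\Id+\OK=-\frac12(\Id-2\OK)$, with nonzero prefactor $\pm\frac12$, it suffices to prove that $B_\pm:=\Id\pm2\OK$ is a continuous isomorphism of $H^1_{0,}(\Sigma)$ onto itself.

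For continuity, I would first record that $\tau_L$ maps $H^1_{0,}(0,T)$ contractively into itself: if $f\in H^1(0,T)$ with $f(0)=0$ then its extension by zero across $t=0$ is $H^1$ (the vanishing initial value is exactly what removes the jump), the shift is an $L^2$-isometry that commutes with $\partial_t$, restriction to $(0,T)$ does not increase norms, and the shifted function again vanishes at $t=0$. Plugging this into the matrix form of $\OK$ gives boundedness of $\OK$, hence of $B_\pm$ and of $\pm\frac12\Id+\OK$, on $H^1_{0,}(\Sigma)$, with operator norm at most $1$.

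For bijectivity, I would compute $(2\OK)^2=\tau_{2L}\,\Id$ — the shift by $2L$ acting componentwise — so that $B_+B_-=B_-B_+=\Id-\tau_{2L}$. On the finite interval $(0,T)$ this shift is nilpotent: with $n$ the number of time slices from \eqref{eq:time_slices} one has $\tau_L^{\,n}=0$ on $(0,T)$, hence $\tau_{2L}^{\,k}=0$ as soon as $2k\ge n$. Consequently $\Id-\tau_{2L}$ is invertible on $H^1_{0,}(\Sigma)$, its inverse being the \emph{finite} Neumann sum $C:=\sum_{k\ge0}\tau_{2L}^{\,k}$, a finite sum of contractions that again preserves the vanishing initial trace. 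As $C$ is a polynomial in $\tau_L$ it commutes with $B_\pm$, so $B_\mp C$ is a bounded two-sided inverse of $B_\pm$; hence $\pm\frac12\Id+\OK=\pm\frac12 B_\pm$ is a continuous isomorphism of $H^1_{0,}(\Sigma)$.

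I expect the only point needing care to be the mapping properties of the shift $\tau_L$ on $H^1_{0,}(0,T)$: it is precisely the zero trace at $t=0$ that keeps extension by zero inside $H^1$, which is why the statement holds on $H^1_{0,}(\Sigma)$ and not on all of $H^1(\Sigma)$; the rest is elementary operator algebra on top of \eqref{eq:finalformK}. As a consistency check, injectivity of $\pm\frac12\Id+\OK$ on $H^1_{0,}(\Sigma)$ is also a free consequence of the $L^2(\Sigma)$-ellipticity in Theorem~\ref{thm:L2ellip} together with the continuous embedding $H^1_{0,}(\Sigma)\hookrightarrow L^2(\Sigma)$, but surjectivity and bounded invertibility still rely on the explicit Volterra-type inversion above.
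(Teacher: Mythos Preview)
Your argument is correct and takes a genuinely different route from the paper's own proof. The paper does not invert $\pm\tfrac12\Id+\OK$ directly; instead it invokes the Calder\'on identity
\[
\OV\,\OW \;=\; \Bigl(\tfrac12\Id-\OK\Bigr)\Bigl(\tfrac12\Id+\OK\Bigr),
\]
together with the previously established facts that $\OW:H^1_{0,}(\Sigma)\to L^2(\Sigma)$ and $\OV:L^2(\Sigma)\to H^1_{0,}(\Sigma)$ are isomorphisms. Since the two factors on the right commute and their product is bijective, each factor is bijective on $H^1_{0,}(\Sigma)$, and continuity is read off from \eqref{eq:finalformK}.

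What each approach buys: your proof is entirely self-contained---it needs nothing about $\OV$ or $\OW$, only the explicit formula \eqref{eq:finalformK}, the observation that the zero initial trace makes the causal shift $\tau_L$ a contraction of $H^1_{0,}(0,T)$, and the nilpotency $\tau_L^{\,n}=0$ on a finite interval. It even produces an explicit bounded inverse $(\pm\tfrac12\Id+\OK)^{-1}=\pm 2\,B_\mp\sum_{k\ge0}\tau_{2L}^{\,k}$, which is useful information the paper's argument does not yield. The paper's route, by contrast, is closer to the standard Calder\'on-calculus machinery and is the template that one would hope to generalise to $d\ge2$, where no explicit formula like \eqref{eq:finalformK} is available and a Volterra-type inversion is not an option. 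Both are valid; yours is the cleaner $1d$ argument, theirs is the one that points toward higher dimensions.
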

}}
\begin{proof}
$\hphantom{.}$\linebreak
\noindent\underline{\emph{Step 1:}} 
We begin by recalling from 
\cite[Thm.~20]{Gua10} that $\OW$ satisfies the ellipticity estimate
\begin{equation*}
\label{eq:W_Guardasoni}
\dual{\OW \phi}{\partial_t \phi}_\Sigma \geq c(T) \norm{\partial_t \phi}{L^2(
\Sigma)}^2, \quad \forall \phi \in H^1_{0,}(\Sigma),
\end{equation*}
for some constant $c(T)>0$ depending on the terminal time $T$. The fact that 
$\partial_t:H^1_{0,}(\Sigma) \to L^2(\Sigma)$ is an isomorphism 
\cite[p.~6]{SUZ21} implies the existence of its inverse, which we denote by 
$\partial_t^{-1}$.

By taking $v = \partial_t^{-1} \phi$, for $\phi\in H^1_{
0,}(\Sigma)$ we obtain
\begin{equation*}
\label{eq:W_isomorphism}
    \dual{\OW \partial_t^{-1}v}{v}_\Sigma \geq c(T) \norm{ v}{L^2(\Sigma)}^2, 
    \quad \forall v\in L^2(\Sigma),
\end{equation*}
i.e., $\OW \partial_t^{-1}: L^2(\Sigma)\to L^2(\Sigma)$
defines an isomorphism. Hence $\OW:  H^1_{0,}(\Sigma)\to L^2(\Sigma)$
is also an isomorphism. 

\noindent\underline{\emph{Step 2:}} 
Finally, using that in $1d$ $\OV: L^2(\Sigma)\to H^{1}_{0,}(\Sigma)$ is 
continuous and an isomorphism, as shown in \cite[Sec.~2]{SUZ21}, we conclude 
the desired result from the Calder\'on identity \cite[p.~18]{StU22} 
\begin{equation}
\label{eq:CalderonId}
    \OV \OW = \left (\frac{1}{2}\Id - \OK\right) 
    \left(\frac{1}{2}\Id + \OK\right).
\end{equation}
\end{proof}

Similarly, we can prove:\\
\vspace{1em}
\noindent\fbox{\parbox{0.975\textwidth}{
\begin{theorem}
\label{thm:K1dHhalf}
In 1d, the operators  $\pm\frac{1}{2} \Id + \OK : H^{1/2}_{0,}(\Sigma) \to 
H^{1/2}_{0,}(\Sigma)$ are continuous and isomorphisms. 
\end{theorem}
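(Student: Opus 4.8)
The plan is to deduce the statement from the two preceding results by interpolation, exploiting that, by the very definition used in this paper, $H^{1/2}_{0,}(\Sigma) = [H^1_{0,}(\Sigma), L^2(\Sigma)]_{\theta=1/2}$. Theorem~\ref{thm:L2ellip} (together with the continuity bound established immediately before it) shows that $\pm\frac12\Id+\OK$ maps $L^2(\Sigma)$ boundedly into itself, and Theorem~\ref{thm:K1dH1} shows the same on $H^1_{0,}(\Sigma)$; interpolating these two mapping properties at $\theta=1/2$ immediately yields continuity of $\pm\frac12\Id+\OK : H^{1/2}_{0,}(\Sigma)\to H^{1/2}_{0,}(\Sigma)$.

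For the isomorphism property I would first observe that $\pm\frac12\Id+\OK$ has one and the same two-sided inverse on both endpoint spaces. The cleanest way to see this is to write it down explicitly: since, by \eqref{eq:finalformK}, $\OK$ translates its argument in time by $L$, and since functions in these spaces vanish for $t\le 0$, formula \eqref{eq:finalformK} gives $\OK^{\,n}=0$ on $\Sigma$ with $n$ the number of time slices from \eqref{eq:time_slices}; consequently $\pm\frac12\Id+\OK=\pm\frac12(\Id\pm 2\OK)$ is invertible with $\big(\pm\frac12\Id+\OK\big)^{-1}=\pm 2\sum_{k=0}^{n-1}(\mp 2)^k\,\OK^{\,k}$, a finite sum of powers of $\OK$, each of which is bounded on $L^2(\Sigma)$ and, because translation by $L$ combined with the vanishing initial value preserves $H^1_{0,}(\Sigma)$, also on $H^1_{0,}(\Sigma)$. (Equivalently, one may simply invoke the bounded inverse theorem on each endpoint space and note that injectivity of $\pm\frac12\Id+\OK$ on $L^2(\Sigma)$ forces the two inverses to agree on $H^1_{0,}(\Sigma)$.) Interpolating this inverse at $\theta=1/2$ gives its boundedness on $H^{1/2}_{0,}(\Sigma)$, and since the relations $\big(\pm\frac12\Id+\OK\big)\big(\pm\frac12\Id+\OK\big)^{-1}=\big(\pm\frac12\Id+\OK\big)^{-1}\big(\pm\frac12\Id+\OK\big)=\Id$ hold on the dense subspace $H^1_{0,}(\Sigma)$ and all operators involved are bounded on $H^{1/2}_{0,}(\Sigma)$, they extend to $H^{1/2}_{0,}(\Sigma)$; this establishes the isomorphism.

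A route parallel to the proof of Theorem~\ref{thm:K1dH1} is also available: interpolate the Calder\'on identity \eqref{eq:CalderonId} together with the mapping properties of $\OV$ and $\OW$ at the fractional level, the latter themselves obtained by interpolating their $L^2$/$H^1_{0,}$ counterparts used in Steps~1--2 there. I expect no genuine obstacle; the only point that needs a little care is the one flagged above, namely checking that the bounded inverses furnished by Theorems~\ref{thm:L2ellip} and~\ref{thm:K1dH1} are restrictions of a single operator, which is exactly what the explicit finite Neumann series makes transparent.
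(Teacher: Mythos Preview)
Your proposal is correct and takes a genuinely different route from the paper. The paper proceeds via the Calder\'on identity \eqref{eq:CalderonId}: it first shows, by duality and interpolation, that $\OW:H^{1/2}_{0,}(\Sigma)\to[H^{1/2}_{,0}(\Sigma)]^\prime$ is an isomorphism, then combines this with the fact (quoted from \cite{SUZ21}) that $\OV:[H^{1/2}_{,0}(\Sigma)]^\prime\to H^{1/2}_{0,}(\Sigma)$ is an isomorphism, and finally reads off the result for each factor $\pm\frac12\Id+\OK$ from the commuting product $\OV\OW=(\frac12\Id-\OK)(\frac12\Id+\OK)$. Your main argument is more elementary and entirely self-contained: you exploit the nilpotency of $\OK$ visible in \eqref{eq:finalformK} to write down the explicit finite Neumann series inverse, observe that it is the \emph{same} operator on $L^2(\Sigma)$ and on $H^1_{0,}(\Sigma)$, and interpolate. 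This bypasses both $\OV$ and $\OW$ and the external reference \cite{SUZ21}, and it makes fully transparent the point you rightly flag, namely that the two endpoint inverses coincide (a point the paper's interpolation of $\OW$ in Step~2 uses implicitly). The alternative route you sketch at the end is precisely the paper's argument. The trade-off is that your approach is specific to the $1d$ nilpotent structure of $\OK$, whereas the paper's Calder\'on-based route is closer in spirit to what one would attempt in higher dimensions.
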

}}
\vspace{-2em}
\begin{proof}
$\hphantom{.}$\linebreak
\noindent\underline{\emph{Step 1:}} By step 1 in the proof of 
Theorem~\ref{thm:K1dH1}, we have that $\OW:  H^1_{0,}(\Sigma)\to L^2(\Sigma)$
is an isomorphism. 

\noindent\underline{\emph{Step 2:}} By duality, we get that $\OW:  L^2(\Sigma)
\to [H^{1}_{,0}(\Sigma)]^\prime$ is an isomorphism. Hence, by 
interpolation, $\OW:  H^{1/2}_{0,}(\Sigma)\to [H^{1/2}_{,0}(\Sigma)]^\prime$
is also an isomorphism.

\noindent\underline{\emph{Step 3:}} 
Finally, we proceed as in step 1 in the proof of Theorem~\ref{thm:K1dH1}, but 
now using that in $1d$ $\OV: [H^{1/2}_{,0}(\Sigma)]^\prime\to H^{1/2}_{0,}
(\Sigma)$ is continuous and an isomorphism, as shown in \cite[Sec.~2]{SUZ21}.
\end{proof}

Theorems~\ref{thm:K1dH1} and \ref{thm:K1dHhalf} imply that the operators satisfy 
their corresponding inf-sup conditions, which would be enough to have existence 
and uniqueness for the related boundary integral equations of the second kind. 
Yet, they will not suffice to guarantee that we satisfy the required discrete 
inf-sup conditions. This will be addressed in the next Section.

\section{Discretisation}
\label{sec:dis}

Let $\Sigma_h$ be a mesh of the lateral boundary $\Sigma$, consisting of 
non-overlapping intervals and with maximal meshwidth $h$. Furthermore, we 
assume the spatial part of the support of each element is restricted to either
the left ($x=0$) or the right ($x=L$) boundary. In other words, we can write
$\Sigma_h = \Sigma_h^{0}\cup\Sigma_h^{L}$, with $\Sigma_h^{0}$ a partition of
the left boundary $\lbrace x=0 \rbrace \times (0,T)$, and  $\Sigma_h^{L}$ a 
partition of the right boundary $\lbrace x=L \rbrace \times (0,T)$.

We consider the following boundary element spaces 
\begin{itemize}
\item $\calS^0(\Sigma_h)$ that is spanned by space-time piecewise constant 
basis functions on $\Sigma_h$;
\item $\calS^1_{{0,}}(\Sigma_h)$ that is spanned by space-time piecewise 
linear basis functions on $\Sigma_h$ \emph{with zero-initial conditions}.
\end{itemize}

As stated in Theorem \ref{thm:L2ellip}, the operator $-\frac{1}{2}\Id +\OK$ is 
continuous and elliptic in $L^2(\Sigma)$. Hence, for given Dirichlet data $g_D 
\in L^2(\Sigma)$, we the following conforming Galerkin formulation will be 
uniquely solvable:\\ 
\vspace{1em}
\noindent\fbox{\parbox{0.975\textwidth}{%
Find $z_h^0\in \calS^0(\Sigma_h)$ such that
\begin{equation}\label{eq:BIEKdis1d_pwc}
    \inner{\left(-\frac{1}{2}\Id +\OK\right)z_h^0}{q_h}_{L^2(\Sigma)} = 
    \inner{g_D}{q_h}_{L^2(\Sigma)},
\end{equation}
for all $q_h \in \calS^0(\Sigma_h)\subset L^2(\Sigma)$.
}}

Additionally, inspired by Theorem~\ref{thm:K1dHhalf}, we pursue the following 
Galerkin discretisation of \eqref{eq:BIEK}:\\ 
\vspace{1em}
\noindent\fbox{\parbox{0.975\textwidth}{%
Find $z_h^1\in \calS^1_{{0,}}(\Sigma_h)\subset H^{1/2}_{0,}(\Sigma)$ such that
\begin{align}
\label{eq:BIEKdis1d_pwl}
\dual{\left(-\frac{1}{2}\Id+\OK\right)z_h^1}{v_h}_{\Sigma} = \dual{g}{v_h}_{
\Sigma},
\end{align}
for all $v_h \in \calS^1_{{0,}}(\Sigma_h)\subset[H^{1/2}_{0,}(\Sigma)]^\prime$.
}}

Note that our results from Section~\ref{sec:2ndKF} also allow us to consider 
\eqref{eq:BIEKdis1d_pwl} in $H^1_{0,}(\Sigma)$, but omit it for conciseness.

We obviously proceed analogously for \eqref{eq:BIEKex}, but we will refrain 
from writing the formula for the sake of brevity. In order to have stability, 
we need to introduce one requirement on the family of meshes that we can use:

\begin{definition}
\label{def:adm}
Let $X_h$ be a boundary element space. A mesh $\Sigma_h$ of $\Sigma$ is 
$X_h$-admissible if the $L^2$-orthogonal projection onto $X_h$ is $H^1$-stable.
\end{definition}

\begin{remark}
The $H^1$-stability in Definition~\ref{def:adm} is a well studied requirement 
in Galerkin discretisations. It always holds for globally quasi-uniform meshes
and piecewise polynomial finite element spaces \cite{BrX91,Ste03} (and hence 
in particular for $\calS^1(\Sigma_h)$). For locally refined meshes, one needs 
to (i) impose a-priori bounds on the gradings of the mesh, as in 
\cite{BPS02,Car02,CrT87,ErJ95,Ste02}; 
(ii) impose a fixed refinement strategy \cite{BaY14,Car04,KPP13}; or 
(iii) a combination of both \cite{KPP14,GSU21}.
\end{remark}

\begin{figure}[h]
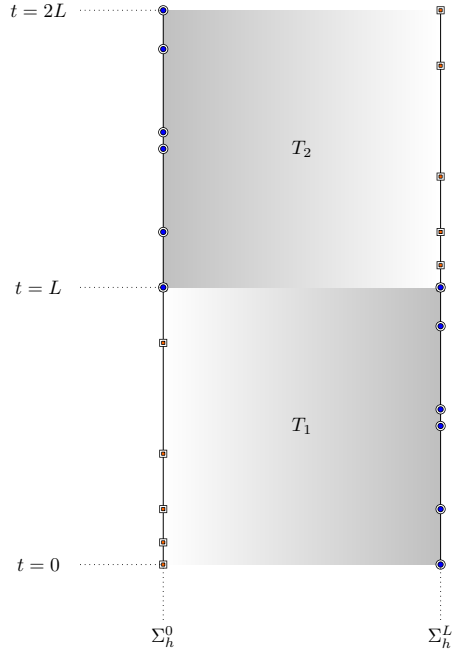

\centering
\includestandalone[width=0.5\textwidth]{images/visualisation_constraint}%
\caption{Example of non-uniform mesh satisfying 
Assumption~\ref{Assumption:mesh}. On each time-slice $T_j$, the degrees of 
freedom (DoFs) of each boundary agree with the DoFs on the opposite boundary 
shifted in time by $L$.}
\label{fig:Example_mesh}
\end{figure}

In order to ensure discrete inf-sup stability, we impose some additional 
constraints on the $\calS^1_{{0,}}(\Sigma_h)$-admissible mesh. We assume the 
following mesh condition holds:
\begin{assumption}\label{Assumption:mesh}
   Let us assume $T = nL$ for some $n\in \N$ and consider the time-slices 
   $T_j:=((j-1)L,jL)$ for $j=1,\dots,n$. Let $\Sigma_h$ be mesh of $\Sigma$. 
   We further assume that the mesh on the boundary restricted to a time-slice 
   corresponds to the mesh on the opposite boundary restricted to the 
   subsequent time-slice. In other words, we require:
   \begin{align}
       \Sigma^0_{h|T_j}= \Sigma^L_{h|T_{j+1}}, \text{ and }\: \Sigma^L_{h|T_j}=
       \Sigma^0_{h|T_{j+1}}, \quad \forall j=1,\dots,n-1.
   \end{align}
\end{assumption}

Clearly, all uniform meshes satisfy this assumption. An example of a 
non-uniform mesh satisfying assumption \ref{Assumption:mesh} is given in 
Figure~\ref{fig:Example_mesh}.

Then, we can show the following result:

\noindent\fbox{\parbox{0.975\textwidth}{
\begin{theorem}
\label{thm:DinfsupK1d}

Let $\Sigma_h$ be an $\calS^1_{{0,}}(\Sigma_h)$-admissible
mesh of $\Sigma$ such that Assumption \ref{Assumption:mesh} holds.
Then, the following discrete inf-sup conditions are satisfied in 1d
  \begin{equation}
  \label{eq:infsupK1d}
    c_1^{\OK} \, \norm{ w_h }{H^{1/2}_{0,}(\Sigma)} \leq 
    \sup\limits_{0 \neq \mu_h \in \calS^1_{{0,}}(\Sigma_h)}
    \frac{\abs{\dual{\left(\pm\frac{1}{2}\Id+\OK\right) w_h}{\mu_h}_\Sigma}}
    {\norm{\mu_h}{[H^{1/2}_{0,}(\Sigma)]^\prime}},
  \end{equation}
  for all $w_h \in \calS^1_{{0,}}(\Sigma_h)$.
\end{theorem}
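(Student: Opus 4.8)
The plan is to reduce the discrete inf-sup condition for $\pm\frac{1}{2}\Id+\OK$ to the known mapping properties of the operators and the $H^1$-stability of the $L^2$-orthogonal projection, exploiting the explicit structure of $\OK$ from \eqref{eq:finalformK} together with Assumption~\ref{Assumption:mesh}. Since $\calS^1_{0,}(\Sigma_h)\subset H^{1/2}_{0,}(\Sigma)$ and we test against the same space viewed inside $[H^{1/2}_{0,}(\Sigma)]^\prime$, the first step is to rewrite the supremum using the duality pairing and the isomorphism $\OW:H^1_{0,}(\Sigma)\to L^2(\Sigma)$ (equivalently $\OW:H^{1/2}_{0,}(\Sigma)\to[H^{1/2}_{,0}(\Sigma)]^\prime$) from Step~1 of the proof of Theorem~\ref{thm:K1dH1}. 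The Calder\'on identity \eqref{eq:CalderonId}, $\OV\OW=(\frac12\Id-\OK)(\frac12\Id+\OK)$, lets us relate a well-chosen test function to $\OW w_h$; concretely, for $w_h\in\calS^1_{0,}(\Sigma_h)$ I would like to plug in a discrete approximation of $\OW w_h$ (or of $\partial_t^{-1}$ applied to something) as the test function $\mu_h$, so that the numerator becomes coercive up to controllable errors.

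The key steps, in order, are: (i) Use \eqref{eq:finalformK} to observe that on a mesh satisfying Assumption~\ref{Assumption:mesh}, the map $w\mapsto \OK w$ sends $\calS^1_{0,}(\Sigma_h)$ into itself — because translating by $L$ in time and swapping the two boundary components carries $\Sigma^0_{h|T_j}$ onto $\Sigma^L_{h|T_{j+1}}$ exactly; hence $(\pm\frac12\Id+\OK)$ maps $\calS^1_{0,}(\Sigma_h)$ boundedly into itself. (ii) Invoke Theorem~\ref{thm:K1dHhalf}: the continuous operator is an isomorphism on $H^{1/2}_{0,}(\Sigma)$, so $\norm{w_h}{H^{1/2}_{0,}(\Sigma)}\lesssim\norm{(\pm\frac12\Id+\OK)w_h}{H^{1/2}_{0,}(\Sigma)}$. (iii) For $v:=(\pm\frac12\Id+\OK)w_h$, which by (i) lies in $\calS^1_{0,}(\Sigma_h)$, use that $\OW:H^{1/2}_{0,}(\Sigma)\to[H^{1/2}_{,0}(\Sigma)]^\prime$ is an isomorphism to get $\norm{v}{H^{1/2}_{0,}(\Sigma)}\lesssim\sup_{\psi}\dual{\OW v}{\psi}/\norm{\psi}{H^{1/2}_{,0}(\Sigma)}$, and then replace the continuous supremum by a discrete one using that the $L^2$-orthogonal projection $Q_h$ onto $\calS^1_{0,}(\Sigma_h)$ (or onto its mirrored counterpart) is $H^1$-stable, hence $H^{1/2}$-stable by interpolation, giving $\dual{\OW v}{\psi}=\dual{\OW v}{Q_h\psi}+(\text{small})$ — this is where admissibility is used. (iv) Finally undo the substitution: a test function $\mu_h$ realizing the discrete sup for $\OW v$ gets mapped, via the discrete version of Step~2 of Theorem~\ref{thm:K1dHhalf}'s argument and the Calder\'on identity, to a test function for $(\pm\frac12\Id+\OK)w_h$, with the pairing $\dual{(\pm\frac12\Id+\OK)w_h}{\mu_h}_\Sigma$ bounded below by $c\,\norm{w_h}{H^{1/2}_{0,}(\Sigma)}\norm{\mu_h}{[H^{1/2}_{0,}(\Sigma)]^\prime}$.

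I expect the main obstacle to be step (iii)–(iv): ensuring that the chain of isomorphisms $\OW,\OV$ and the Calder\'on identity can be transported to the \emph{discrete} level without losing the constant. The continuous identities hold, but the discrete test space $\calS^1_{0,}(\Sigma_h)$ is not invariant under $\OV$ or $\OW$, so one cannot simply restrict; the remedy is precisely the $H^{1/2}$-stability of $Q_h$, which lets one commute projections past these operators up to a bounded error and a quasi-best-approximation term. Getting the norms to match — the trial space measured in $H^{1/2}_{0,}(\Sigma)$ and the test space in the dual $[H^{1/2}_{0,}(\Sigma)]^\prime$ — will require care with the interpolation-space duality $[H^{1/2}_{0,}(\Sigma)]^\prime=[H^{1/2}_{,0}(\Sigma)]'$-type identifications already used in Theorem~\ref{thm:K1dHhalf}, and with the fact that $\calS^1_{0,}(\Sigma_h)$ sits in \emph{both} $H^{1/2}_{0,}(\Sigma)$ and its dual. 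A secondary technical point is the boundary time-slices: Assumption~\ref{Assumption:mesh} is stated for $T=nL$, and the self-mapping property in step (i) relies on $w_h$ vanishing on the initial slice for $t<0$, so the shift by $L$ never needs data outside $(0,T)$; I would check the endpoint slice $T_n$ separately, as in the proof of Theorem~\ref{thm:L2ellip}.
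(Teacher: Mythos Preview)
Your steps (i) and (ii) are exactly right and match the paper: Assumption~\ref{Assumption:mesh} together with \eqref{eq:finalformK} forces $(\pm\tfrac12\Id+\OK)w_h\in\calS^1_{0,}(\Sigma_h)$, and Theorem~\ref{thm:K1dHhalf} gives $\norm{w_h}{H^{1/2}_{0,}(\Sigma)}\lesssim\norm{v}{H^{1/2}_{0,}(\Sigma)}$ with $v:=(\pm\tfrac12\Id+\OK)w_h$. The detour in steps (iii)--(iv) through $\OW$ and the Calder\'on identity, however, introduces a genuine gap. When you write $\dual{\OW v}{\psi}=\dual{\OW v}{Q_h\psi}+(\text{small})$, the ``small'' term is $\dual{\OW v}{\psi-Q_h\psi}$; this would vanish if $\OW v$ were in $\calS^1_{0,}(\Sigma_h)$, but $\OW$ does \emph{not} preserve the discrete space, so there is no reason for this remainder to be controlled uniformly in $h$ by $\norm{\psi}{H^{1/2}_{,0}(\Sigma)}$. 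The same obstruction blocks step (iv): neither $\OV$ nor $\OW$ maps the discrete space into itself, so you cannot transport the Calder\'on identity to the discrete level in the way you sketch.

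The paper avoids this entirely by never introducing $\OW$. It works directly with $v$, which \emph{is} discrete by your step (i). The continuous inf-sup (Theorem~\ref{thm:K1dHhalf}) furnishes $\mu_{w_h}\in[H^{1/2}_{0,}(\Sigma)]'$ realising $\dual{v}{\mu_{w_h}}/\norm{\mu_{w_h}}{[H^{1/2}_{0,}(\Sigma)]'}\geq\alpha_{\OK}\norm{w_h}{H^{1/2}_{0,}(\Sigma)}$. One then takes $\mu_h:=Q^0_h\mu_{w_h}$: since $v\in\calS^1_{0,}(\Sigma_h)$, the defining property of the $L^2$-projection gives $\dual{v}{Q^0_h\mu_{w_h}}=\dual{v}{\mu_{w_h}}$ \emph{exactly}, with no error term; and admissibility (via the auxiliary Lemma showing $\norm{Q^0_h\nu}{[H^{1/2}_{0,}(\Sigma)]'}\leq c_s\norm{\nu}{[H^{1/2}_{0,}(\Sigma)]'}$, obtained by interpolation and self-adjointness of $Q^0_h$) bounds the denominator. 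This is a clean Fortin argument whose Fortin operator is $Q^0_h$ itself; the crucial point you missed is that the projection trick must be applied to the pairing $\dual{v}{\cdot}$ with $v$ discrete, not to $\dual{\OW v}{\cdot}$.
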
 
}}
\vspace{1em}

We dedicate the remaining of this subsection to prepare the required 
ingredients to prove this Theorem.

\subsection{Proof of Theorem~\ref{thm:DinfsupK1d}}
\label{ssec:auxLemmas}

We begin by introducing the $L^2$ projection $Q^0_h: L^2(\Sigma)\to 
\calS^1_{0,}(\Sigma_h)$ as 
\begin{equation}
\label{eq:L2project}
    \dual{Q^0_h u}{v_h}_{\Sigma} = \dual{u}{v_h}_{\Sigma}, \quad 
    \forall v_h \in \calS^1_{0,}(\Sigma_h).
\end{equation}
Let $Q^{0*}_h: L^2(\Sigma) \to \calS^1_{0,}(\Sigma_h) \subset L^2(\Sigma)$ be 
the unique (Hilbert space-)adjoint operator. We note that in this case 
$Q^{0*}_h=Q^0_h$.

In order to proceed, we need to show the following auxiliary result
\begin{lemma}
\label{lem:Qh_stab_dualspace}

For $\calS^1_{{0,}}(\Sigma_h)$-admissible meshes
\begin{equation}
\label{eq:stabdual_recalled}
 \norm{Q^0_h \nu}{[H^{1/2}_{0,}(\Sigma)]^\prime} \leq c_s \norm{\nu}{[H^{1/2}_{
 0,}(\Sigma)]^\prime}, \qquad \forall \nu \in [H^{1/2}_{0,}(\Sigma)]^\prime,
\end{equation}
is satisfied with a constant $c_s>0$.
\end{lemma}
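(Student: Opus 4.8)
The statement to prove is that the $L^2$-orthogonal projection $Q^0_h$ onto $\calS^1_{0,}(\Sigma_h)$, which is assumed $H^1$-stable (by $\calS^1_{0,}(\Sigma_h)$-admissibility), is also stable in the dual norm $[H^{1/2}_{0,}(\Sigma)]'$. The natural route is a duality argument combined with interpolation, mimicking the classical Aubin--Nitsche-type reasoning used for $H^{-1}$-stability of $L^2$-projections (cf.\ \cite{Ste03}). The crucial algebraic fact is that $Q^0_h$ is self-adjoint in $L^2(\Sigma)$, which is explicitly noted just before the lemma ($Q^{0*}_h = Q^0_h$).

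First I would record that $Q^0_h$ is trivially $L^2(\Sigma)$-stable with constant $1$, and $H^1_{0,}(\Sigma)$-stable with some constant $c_H$ by the admissibility assumption together with the fact that $Q^0_h$ preserves zero initial conditions (so it maps $H^1_{0,}(\Sigma)$ into $\calS^1_{0,}(\Sigma_h) \subset H^1_{0,}(\Sigma)$ boundedly). Next I would pass to the duals: since $Q^0_h = Q^{0*}_h$, the adjoint of $Q^0_h : H^1_{,0}(\Sigma) \to H^1_{,0}(\Sigma)$ acting on the dual side is again $Q^0_h$, now viewed as a bounded operator $[H^1_{,0}(\Sigma)]' \to [H^1_{,0}(\Sigma)]'$ with the same constant $c_H$. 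Likewise, from $L^2$-stability we get $Q^0_h : L^2(\Sigma) \to L^2(\Sigma)$ on the dual scale (identifying $L^2$ with its dual) with constant $1$. Concretely, for $\nu \in [H^1_{,0}(\Sigma)]'$ one writes
\begin{equation*}
\norm{Q^0_h \nu}{[H^1_{,0}(\Sigma)]'} = \sup_{0 \neq \varphi \in H^1_{,0}(\Sigma)} \frac{\dual{Q^0_h \nu}{\varphi}_\Sigma}{\norm{\varphi}{H^1_{,0}(\Sigma)}} = \sup_{0 \neq \varphi} \frac{\dual{\nu}{Q^0_h \varphi}_\Sigma}{\norm{\varphi}{H^1_{,0}(\Sigma)}} \leq c_H \norm{\nu}{[H^1_{,0}(\Sigma)]'},
\end{equation*}
using self-adjointness in the middle step and $H^1_{,0}$-stability of $Q^0_h$ at the end.

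Then I would invoke interpolation: by the definitions $H^{1/2}_{0,}(\Sigma) = [H^1_{0,}(\Sigma), L^2(\Sigma)]_{1/2}$ and $H^{1/2}_{,0}(\Sigma) = [H^1_{,0}(\Sigma), L^2(\Sigma)]_{1/2}$, duality of interpolation spaces gives $[H^{1/2}_{0,}(\Sigma)]' = [L^2(\Sigma), [H^1_{0,}(\Sigma)]']_{1/2} = [[H^1_{,0}(\Sigma)]', L^2(\Sigma)]_{1/2}$ (up to the usual identifications, and using that $[H^1_{0,}(\Sigma)]'$ is realised via the duality pairing $\dual{\cdot}{\cdot}_\Sigma$ as the dual relevant to $H^{1/2}_{,0}$; the bookkeeping here is exactly the pattern already used in Step 2 of the proof of Theorem~\ref{thm:K1dHhalf}). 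Applying the interpolation theorem to the operator $Q^0_h$, bounded on both endpoints $L^2(\Sigma)$ (constant $1$) and $[H^1_{,0}(\Sigma)]'$ (constant $c_H$), yields boundedness on the midpoint $[H^{1/2}_{0,}(\Sigma)]'$ with constant $c_s = c_H^{1/2} \leq \max\{1, c_H\}$, which is the claim.

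\textbf{Main obstacle.} The genuinely delicate point is not interpolation but the dual-space bookkeeping: one must be careful that the spaces $H^1_{0,}$ and $H^1_{,0}$ (zero initial vs.\ zero terminal conditions) are swapped under duality with respect to the $L^2(\Sigma)$ pairing, so that the "dual" of $H^{1/2}_{0,}(\Sigma)$ is the interpolation space built from $[H^1_{,0}(\Sigma)]'$ and $L^2(\Sigma)$, and that $Q^0_h$ — being self-adjoint and preserving both zero-initial-condition spaces appropriately — does act boundedly on the correct dual scale. I would also need to confirm that $Q^0_h$ restricted to $H^1_{0,}(\Sigma)$ indeed lands in $H^1_{0,}(\Sigma)$ (equivalently, that $\calS^1_{0,}(\Sigma_h) \subset H^1_{0,}(\Sigma)$, which holds by construction of the space with zero initial conditions), so that the admissibility hypothesis is literally applicable; once that is pinned down, the rest is the standard interpolation-of-a-self-adjoint-projection argument.
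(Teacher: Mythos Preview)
Your approach uses the same four ingredients as the paper --- $L^2$-stability, $H^1$-stability from admissibility, self-adjointness $Q^{0*}_h=Q^0_h$, and interpolation --- but applies them in the reverse order: you dualise first and then interpolate on the negative scale, whereas the paper interpolates first to obtain $H^{1/2}_{0,}(\Sigma)$-stability of $Q^0_h$ and then invokes \cite[Thm.~2.1]{Ste03} (boundedness of a self-adjoint projection on $X$ $\Leftrightarrow$ boundedness on $X'$) to pass to $[H^{1/2}_{0,}(\Sigma)]'$ in one stroke. Both orderings are valid and yield the same constant $c_s=\sqrt{c_{s,1}}$.

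There is, however, a genuine bookkeeping error in your execution. The spaces $H^1_{0,}(\Sigma)$ and $H^1_{,0}(\Sigma)$ are \emph{not} swapped under $L^2$-duality; the dual of $H^{1/2}_{0,}(\Sigma)=[H^1_{0,}(\Sigma),L^2(\Sigma)]_{1/2}$ is, by the interpolation duality theorem, $[L^2(\Sigma),(H^1_{0,}(\Sigma))']_{1/2}$, with $(H^1_{0,}(\Sigma))'$ and no terminal-zero space appearing. Your displayed estimate should therefore take $\varphi\in H^1_{0,}(\Sigma)$, not $H^1_{,0}(\Sigma)$: then $Q^0_h\varphi\in\calS^1_{0,}(\Sigma_h)\subset H^1_{0,}(\Sigma)$ and the admissibility bound $\norm{Q^0_h\varphi}{H^1_{0,}(\Sigma)}\le c_H\norm{\varphi}{H^1_{0,}(\Sigma)}$ applies directly, giving $Q^0_h:[H^1_{0,}(\Sigma)]'\to[H^1_{0,}(\Sigma)]'$ bounded. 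As written, you would need $Q^0_h$ to map $H^1_{,0}(\Sigma)$ into itself, which it does not (the range has zero initial, not zero terminal, values). Once you drop the spurious $0,$/$,0$ swap, your argument is complete; the paper's ordering simply avoids this pitfall by never touching $(H^1_{0,})'$ explicitly.
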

\begin{proof}
We have that
\begin{equation}
 \norm{Q^0_h w}{L^2(\Sigma)} \leq \norm{w}{L^2(\Sigma)}, \qquad \forall w \in 
 L^2(\Sigma),
\end{equation}
and, from the definition of $\calS^1_{{0,}}(\Sigma_h)$-admissibility, that 
\begin{equation}
 \norm{Q^0_h w}{H^1_{0,}(\Sigma)} \leq c_{s,1} \norm{w}{H^1_{0,}(\Sigma)}, 
 \qquad \forall w \in H^1_{0,}(\Sigma).
\end{equation}

Hence, by interpolation of bounded linear operators, we get that
\begin{equation}
 \norm{Q^0_h w}{H^{1/2}_{0,}(\Sigma)} \leq \sqrt{c_{s,1}} \norm{w}
 {H^{1/2}_{0,}(\Sigma)}, \qquad \forall w \in H^{1/2}_{0,}(\Sigma).
\end{equation}
Moreover, by \cite[Theorem~2.1]{Ste03}, this is equivalent to 
\begin{equation}
\label{eq:L2_proj_dualH1bound}
 \norm{Q^{0*}_h \nu}{[H^{1/2}_{0,}(\Sigma)]^\prime} \leq \sqrt{c_{s,1}} 
 \norm{\nu}{[H^{1/2}_{0,}(\Sigma)]^\prime}, \qquad \forall \nu \in 
 [H^{1/2}_{0,}(\Sigma)]^\prime,
\end{equation}
Finally, the result follows from $Q^{0*}_h=Q^0_h$ in our case. 
\end{proof}

Now we have all the pieces to show our main theorem.\\
\textit{Proof of Theorem~\ref{thm:DinfsupK1d}.}
Without loss of generality, let us prove our result for $\left(-\frac{1}{2}\Id
+\OK\right)$ and note that $\left(\frac{1}{2}\Id+\OK\right)$ follows 
analogously. From \cite[Thm.~5.6]{StU22} we know that $\left(\frac{1}{2}\Id
-\OK\right)$ satisfies the continuous inf-sup condition, we henceforth can 
pick a $\mu_w\in [H^{1/2}_{0,}(\Sigma)]^\prime$ such that
\begin{equation}
\label{eq:defn_mu_z}
\frac{\abs{  \dual{\left(-\frac{1}{2}\Id +\OK\right)w}{\mu_w}_\Sigma}}
{\norm{\mu_w}{[H^{1/2}_{0,}(\Sigma)]^\prime}} \geq \alpha_{\OK} \norm{w}{H^{1/2}_{0,}
(\Sigma)}, \quad \forall w\in H^{1/2}_{0,}(\Sigma),
\end{equation}
for some $0<\alpha_{\OK} <\infty$.

Let $w_h \in \calS_{0,}^1(\Sigma_h)$. Note that $w_h\in H^{1/2}_{0,}(\Sigma)$, 
hence \eqref{eq:defn_mu_z} holds with a $\mu_{w_h}\in [H^{1/2}_{0,}(\Sigma)
]^\prime$. In general, given the mapping properties of $\OK$ (see 
\eqref{eq:finalformK}), we \textit{cannot} assume $\OK w_h \in \calS_{0,}^1(
\Sigma_h)$. Nevertheless, as $\OK$ induces a translation in time of magnitude 
$L$, the function $\OK w_h$ remains piecewise linear, and 
Assumption~\ref{Assumption:mesh} \emph{guarantees} that $(-\Id +\OK)w_h \in 
\calS_{0,}^1(\Sigma_h)$. 

Considering this, we derive
\begin{align*}
\sup_{0\neq \mu_h\in \calS_{0,}^1(\Sigma_h)} 
\frac{\abs{\dual{\left(-\frac{1}{2}\Id +\OK\right)w_h}{\mu_h}_\Sigma}}
{\norm{\mu_h}{[H^{1/2}_{0,}(\Sigma)]^\prime}} 
&\geq \frac{\abs{\dual{\left(-\frac{1}{2}\Id +\OK\right)w_h}
{Q^0_h\mu_{w_h}}_\Sigma}}{\norm{Q^0_h\mu_{w_h}}{[H^{1/2}_{0,}(\Sigma)]^\prime}} 
\nonumber\\
&\stackrel{\eqref{eq:L2_proj_dualH1bound}}{\geq} 
\frac{\abs{\dual{\left(-\frac{1}{2}\Id +\OK\right)w_h}{Q^0_h\mu_{w_h}}_\Sigma}}
{c_s\norm{\mu_{w_h}}{[H^{1/2}_{0,}(\Sigma)]^\prime}} \\
&\stackrel{\eqref{eq:L2project}}{=}  
\frac{\abs{\dual{\left(-\frac{1}{2}\Id +\OK\right)w_h}{\mu_{w_h}}_\Sigma}}
{c_s\norm{\mu_{w_h}}{[H^{1/2}_{0,}(\Sigma)]^\prime}}\nonumber\\
&\stackrel{\text{Thm.}~\ref{thm:K1dHhalf}}{\geq}\frac{\alpha_{\OK}}{c_s} 
\norm{w_h}{H^{1/2}_{0,}(\Sigma)},
\end{align*}
for all $w_h\in\calS_{0,}^1(\Sigma_h)$, which proves the theorem with 
$c_1^{\OK}=\dfrac{\alpha_{\OK}}{c_s} $. \\\qed

\section{Error Estimates}
\label{sec:EE}

Based on standard arguments \cite{SaS10,Ste08}, we derive a priori error 
estimates for the Galerkin approximations \eqref{eq:BIEKdis1d_pwc} and 
\eqref{eq:BIEKdis1d_pwl}. 

As a consequence of 
Theorem~\ref{thm:DinfsupK1d} and \cite[Thm.~4.2.1]{SaS10}, 
we have that:
\begin{proposition}
\label{prop:convergence_Galerkin}
Let $z\in H_{0,}^{1/2}(\Sigma)$ be the exact solution to \eqref{eq:BIEKex} and 
$z_h^1 \in \calS^1_{0,}(\Sigma_h)$ the discrete solution of 
\eqref{eq:BIEKdis1d_pwl}. The following a priori error estimate holds
\begin{equation}
\label{eq:apriori_error_estimate}
\norm{z-z_h^1}{H_{0,}^{1/2}(\Sigma)} \leq \left(1+\frac{\norm{\pm \frac{1}{2}\Id 
+\OK}{ }}{{c}_1^{\OK}}\right)\inf_{v_h\in \calS_{0,}^1(\Sigma_h)}\norm{z-v_h}{
H_{0,}^{1/2}(\Sigma)},
\end{equation}
where ${c}_1^{\OK}$ is the inf-sup constant from \eqref{eq:infsupK1d}.
\end{proposition}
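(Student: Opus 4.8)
The plan is to recognise that Proposition~\ref{prop:convergence_Galerkin} is precisely a Cea-type quasi-optimality estimate for a Galerkin discretisation of a bounded, inf-sup stable bilinear form, so the whole argument is a direct invocation of the abstract theory (as the text itself signals by citing \cite[Thm.~4.2.1]{SaS10}). First I would set the stage: write $b(w,v):=\dual{\left(-\frac12\Id+\OK\right)w}{v}_\Sigma$ for the bilinear form on $H^{1/2}_{0,}(\Sigma)\times [H^{1/2}_{0,}(\Sigma)]'$ (with the obvious sign change for the exterior problem), and record its two structural properties. Boundedness, i.e. $|b(w,v)|\le \norm{\pm\frac12\Id+\OK}{}\,\norm{w}{H^{1/2}_{0,}(\Sigma)}\norm{v}{[H^{1/2}_{0,}(\Sigma)]'}$, is immediate from the continuity half of Theorem~\ref{thm:K1dHhalf}. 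Discrete inf-sup stability on $\calS^1_{0,}(\Sigma_h)\times\calS^1_{0,}(\Sigma_h)$ with constant $c_1^{\OK}$ is exactly Theorem~\ref{thm:DinfsupK1d}, valid under the standing admissibility and Assumption~\ref{Assumption:mesh}.

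Next I would observe that these two facts, together with the consistency of the scheme, are the hypotheses of the abstract quasi-optimality lemma, and I would reproduce its short proof for completeness. Let $z$ solve \eqref{eq:BIEKex} and $z_h^1$ solve \eqref{eq:BIEKdis1d_pwl}; Galerkin orthogonality gives $b(z-z_h^1,v_h)=0$ for all $v_h\in\calS^1_{0,}(\Sigma_h)$. Fix an arbitrary $v_h\in\calS^1_{0,}(\Sigma_h)$ and apply the discrete inf-sup condition to $z_h^1-v_h\in\calS^1_{0,}(\Sigma_h)$:
\begin{equation*}
c_1^{\OK}\norm{z_h^1-v_h}{H^{1/2}_{0,}(\Sigma)}
\le \sup_{0\neq\mu_h\in\calS^1_{0,}(\Sigma_h)}\frac{|b(z_h^1-v_h,\mu_h)|}{\norm{\mu_h}{[H^{1/2}_{0,}(\Sigma)]'}}
= \sup_{0\neq\mu_h}\frac{|b(z-v_h,\mu_h)|}{\norm{\mu_h}{[H^{1/2}_{0,}(\Sigma)]'}}
\le \norm{\pm\tfrac12\Id+\OK}{}\,\norm{z-v_h}{H^{1/2}_{0,}(\Sigma)},
\end{equation*}
where the equality uses Galerkin orthogonality and the last step uses boundedness. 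Then the triangle inequality $\norm{z-z_h^1}{H^{1/2}_{0,}(\Sigma)}\le\norm{z-v_h}{H^{1/2}_{0,}(\Sigma)}+\norm{v_h-z_h^1}{H^{1/2}_{0,}(\Sigma)}$ yields
\begin{equation*}
\norm{z-z_h^1}{H^{1/2}_{0,}(\Sigma)}\le\left(1+\frac{\norm{\pm\frac12\Id+\OK}{}}{c_1^{\OK}}\right)\norm{z-v_h}{H^{1/2}_{0,}(\Sigma)},
\end{equation*}
and since $v_h$ was arbitrary, taking the infimum over $v_h\in\calS^1_{0,}(\Sigma_h)$ gives \eqref{eq:apriori_error_estimate}.

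There is essentially no hard analytic obstacle here — the substance was already done in Theorems~\ref{thm:K1dHhalf} and \ref{thm:DinfsupK1d}. The only points needing a little care are bookkeeping ones: confirming that $z_h^1$ is well defined, which follows because the square discrete problem \eqref{eq:BIEKdis1d_pwl} has a square matrix that is injective by Theorem~\ref{thm:DinfsupK1d} and hence invertible; checking that the discrete solution is genuinely conforming, i.e. $z_h^1\in\calS^1_{0,}(\Sigma_h)\subset H^{1/2}_{0,}(\Sigma)$, so that $z-z_h^1$ lies in the energy space where the continuous inf-sup estimate of Theorem~\ref{thm:K1dHhalf} applies; and being consistent about which sign ($+\frac12\Id+\OK$ for the exterior BIE \eqref{eq:BIEKex}) is in play, though the estimate is insensitive to the choice. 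I would also remark that the same argument applied verbatim with $L^2(\Sigma)$ in place of $H^{1/2}_{0,}(\Sigma)$ and Theorem~\ref{thm:L2ellip} in place of Theorem~\ref{thm:DinfsupK1d} gives the analogous estimate for the piecewise-constant scheme \eqref{eq:BIEKdis1d_pwc}, but I would keep the statement focused on the piecewise-linear case as written.
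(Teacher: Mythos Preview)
Your proposal is correct and matches the paper's approach exactly: the paper does not give an explicit proof but simply states that the result follows from Theorem~\ref{thm:DinfsupK1d} and the abstract quasi-optimality result \cite[Thm.~4.2.1]{SaS10}, which is precisely the Cea-type argument you have spelled out. You have in fact supplied more detail than the paper does, but the underlying reasoning is identical.
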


Combining Proposition~\ref{prop:convergence_Galerkin} with the approximation 
property of the space of piecewise linear functions \cite[Thm.~10.9]{Ste08},
yields the a priori error estimate:
\begin{corollary}
\label{cor:error_convergence_hhalf}
Let $z\in H_{0,}^{s}(\Sigma)$ for $s\in [1/2,2]$ denote the exact solution to 
\eqref{eq:BIEK}. Then for the Galerkin solution $z^1_h \in \calS^1_{0,}(\Sigma_h)$ 
we have the a priori error estimate
\begin{equation}
\label{eq:error_convergence_hhalf}
    \norm{z-z^1_h}{H_{0,}^{1/2}(\Sigma)} \leq \tilde{c} \,h^{s-1/2}\norm{z}{
    H^s(\Sigma)},
\end{equation}
for a positive constant $\tilde{c}$ independent of $h$.
\end{corollary}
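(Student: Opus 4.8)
The plan is to chain the abstract quasi-optimality bound of Proposition~\ref{prop:convergence_Galerkin} with a concrete approximation estimate for the trial space $\calS^1_{0,}(\Sigma_h)$. First, observe that although Proposition~\ref{prop:convergence_Galerkin} is phrased for \eqref{eq:BIEKex}, the same argument applies verbatim to \eqref{eq:BIEK}, since Theorems~\ref{thm:K1dHhalf} and~\ref{thm:DinfsupK1d} cover both operators $\pm\tfrac12\Id+\OK$; hence
\[
\norm{z-z_h^1}{H^{1/2}_{0,}(\Sigma)} \;\leq\; \Big(1+\tfrac{\norm{-\tfrac12\Id+\OK}{}}{c_1^{\OK}}\Big)\,\inf_{v_h\in\calS^1_{0,}(\Sigma_h)}\norm{z-v_h}{H^{1/2}_{0,}(\Sigma)},
\]
and it remains only to bound the best-approximation error on the right.

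Next I would choose a quasi-interpolation operator $I_h$ onto $\calS^1_{0,}(\Sigma_h)$ — a Scott--Zhang/Cl\'ement-type operator tailored so that it reproduces the homogeneous initial value at $t=0$; this is consistent with the constraint built into $\calS^1_{0,}(\Sigma_h)$ and is legitimate precisely because $z\in H^s_{0,}(\Sigma)$ already has vanishing initial trace. For such an operator the standard local estimates give, element-wise, $\norm{z-I_hz}{L^2(E)} \leq c\,h_E^{s}\,\abs{z}_{H^s(\widetilde E)}$ and $\norm{z-I_hz}{H^1(E)} \leq c\,h_E^{s-1}\,\abs{z}_{H^s(\widetilde E)}$ for $s\in[1/2,2]$, where $\widetilde E$ is a fixed-size patch around $E$; the range $s\le 2$ is dictated by the polynomial degree. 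Summing over the (shape-regular) mesh yields the global bounds $\norm{z-I_hz}{L^2(\Sigma)} \leq c\,h^{s}\,\norm{z}{H^s(\Sigma)}$ and $\norm{z-I_hz}{H^1_{0,}(\Sigma)} \leq c\,h^{s-1}\,\norm{z}{H^s(\Sigma)}$, exactly as collected in \cite[Thm.~10.9]{Ste08}.

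Then I would interpolate between these two estimates. Since $H^{1/2}_{0,}(\Sigma)=[H^1_{0,}(\Sigma),L^2(\Sigma)]_{\theta=1/2}$ by definition, the map $\operatorname{\mathsf{Id}}-I_h$, bounded from $H^s_{0,}(\Sigma)$ to $L^2(\Sigma)$ with norm $\leq c\,h^{s}$ and from $H^s_{0,}(\Sigma)$ to $H^1_{0,}(\Sigma)$ with norm $\leq c\,h^{s-1}$, is by the interpolation theorem bounded from $H^s_{0,}(\Sigma)$ to $H^{1/2}_{0,}(\Sigma)$ with norm $\leq c\,h^{\frac12 s+\frac12(s-1)}=c\,h^{s-1/2}$. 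Taking $v_h=I_hz$ in the best-approximation term and inserting this into the quasi-optimality estimate above gives \eqref{eq:error_convergence_hhalf}, with $\widetilde c$ depending only on $c_1^{\OK}$, $\norm{\pm\tfrac12\Id+\OK}{}$, the shape-regularity of the mesh family, and the interpolation constants — all independent of $h$.

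The step I expect to be the main obstacle is the bookkeeping at the endpoint $s=1/2$ together with the initial-condition constraint: one must ensure the quasi-interpolant genuinely lands in $\calS^1_{0,}(\Sigma_h)$ (respecting the $t=0$ constraint without spoiling the approximation order near $t=0$), and that the interpolation-space norm $H^{1/2}_{0,}(\Sigma)$ appearing in Proposition~\ref{prop:convergence_Galerkin} is exactly the one in which the summed-and-interpolated estimate is expressed, so that no spurious $H^{1/2}_{00}$-versus-$H^{1/2}$ distinction arises. Both points are handled by working throughout with the homogeneous spaces $H^s_{0,}$ fixed in Section~\ref{sec:prelim} and by choosing the quasi-interpolation operator to preserve the initial datum; the remaining estimates are then entirely standard.
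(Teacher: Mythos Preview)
Your proposal is correct and follows essentially the same approach as the paper: the paper simply states that the corollary follows by combining Proposition~\ref{prop:convergence_Galerkin} with the approximation property of piecewise linears from \cite[Thm.~10.9]{Ste08}, and your argument spells out precisely that combination (quasi-optimality plus $L^2$/$H^1$ approximation estimates interpolated to $H^{1/2}_{0,}$). Your additional care regarding the initial-condition constraint and the endpoint $s=1/2$ is sound and only makes explicit what the cited reference packages.
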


Similarly, by Theorem~\ref{thm:L2ellip}, we can use the standard tools for elliptic 
problems and the approximation property of the space of piecewise constants 
\cite[Thm.~10.4]{Ste08} to get 
\begin{corollary}
\label{cor:error_convergence_L2}
Let $z\in H_{0,}^{s}(\Sigma)$ for $s\in [0,1]$ denote the exact solution to 
\eqref{eq:BIEK}. Then for the Galerkin solution $z^0_h \in \calS^0(\Sigma_h)$ 
we have the a priori error estimate
\begin{equation}
\label{eq:error_convergence_hhalf}
    \norm{z-z^0_h}{L^2(\Sigma)} \leq \tilde{c}_0 \,h^{s}\norm{z}{
    H^s(\Sigma)},
\end{equation}
for a positive constant $\tilde{c}_0$ independent of $h$.
\end{corollary}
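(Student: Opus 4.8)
The plan is the textbook C\'ea/quasi-optimality argument for elliptic Galerkin schemes: use the $L^2(\Sigma)$-continuity and ellipticity of $-\frac{1}{2}\Id+\OK$ recorded in Section~\ref{ssec:CEL2}, together with Galerkin orthogonality, to get quasi-optimality, and then invoke the approximation properties of space-time piecewise constants.

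\emph{First}, I would fix the bilinear form $a(u,v):=\inner{(-\frac{1}{2}\Id+\OK)u}{v}_{L^2(\Sigma)}$. The estimates in Section~\ref{ssec:CEL2} give boundedness, $\abs{a(u,v)}\le\norm{u}{L^2(\Sigma)}\norm{v}{L^2(\Sigma)}$, and Theorem~\ref{thm:L2ellip} gives $\abs{a(w,w)}\ge c_s(n)\norm{w}{L^2(\Sigma)}^2$ for all $w\in L^2(\Sigma)$. Since $\calS^0(\Sigma_h)\subset L^2(\Sigma)$ is a conforming subspace, the discrete form inherits both bounds, so \eqref{eq:BIEKdis1d_pwc} has a unique solution $z_h^0$; and since $z\in H^s_{0,}(\Sigma)\hookrightarrow L^2(\Sigma)$ solves \eqref{eq:BIEK}, testing the continuous equation against $q_h\in\calS^0(\Sigma_h)$ and subtracting \eqref{eq:BIEKdis1d_pwc} yields the Galerkin orthogonality $a(z-z_h^0,q_h)=0$ for all $q_h\in\calS^0(\Sigma_h)$.

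\emph{Second}, I would derive quasi-optimality. For arbitrary $q_h\in\calS^0(\Sigma_h)$, orthogonality allows replacing $z_h^0$ by $q_h$ in the second slot, $a(z-z_h^0,z-z_h^0)=a(z-z_h^0,z-q_h)$, so that
\begin{align*}
c_s(n)\,\norm{z-z_h^0}{L^2(\Sigma)}^2
&\le \abs{a(z-z_h^0,z-z_h^0)}=\abs{a(z-z_h^0,z-q_h)}\\
&\le \norm{z-z_h^0}{L^2(\Sigma)}\,\norm{z-q_h}{L^2(\Sigma)},
\end{align*}
and hence $\norm{z-z_h^0}{L^2(\Sigma)}\le c_s(n)^{-1}\inf_{q_h\in\calS^0(\Sigma_h)}\norm{z-q_h}{L^2(\Sigma)}$. \emph{Third}, since $\Sigma=\{0,L\}\times[0,T]$ and $\calS^0(\Sigma_h)$ restricts on each boundary component to piecewise constants on a partition of $(0,T)$ of meshwidth $\le h$, applying \cite[Thm.~10.4]{Ste08} componentwise gives $\inf_{q_h}\norm{z-q_h}{L^2(\Sigma)}\le c\,h^{s}\norm{z}{H^s(\Sigma)}$ for $z\in H^s(\Sigma)$, $s\in[0,1]$. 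Combining with the previous bound proves the estimate with $\tilde{c}_0:=c/c_s(n)$, which depends on $T/L$ via the number of time slices $n$ from \eqref{eq:time_slices} but is independent of $h$.

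\emph{Main obstacle.} There is essentially none; this is routine. The two points deserving a line each are: (i) the ellipticity from Theorem~\ref{thm:L2ellip} is only sign-indefinite, $\abs{a(w,w)}\ge c_s(n)\norm{w}{L^2(\Sigma)}^2$, but since it holds on all of $L^2(\Sigma)$ it applies in particular to the error $z-z_h^0$, so the C\'ea step goes through verbatim; and (ii) \cite[Thm.~10.4]{Ste08} is stated for an interval, so one should note that it transfers to the product geometry of $\Sigma$ by arguing edge by edge.
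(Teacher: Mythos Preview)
Your proposal is correct and is exactly the approach the paper intends: the paper does not give a detailed proof but simply says ``by Theorem~\ref{thm:L2ellip}, we can use the standard tools for elliptic problems and the approximation property of the space of piecewise constants \cite[Thm.~10.4]{Ste08},'' which is precisely the C\'ea argument you spell out. Your remarks on the absolute-value ellipticity and on applying \cite[Thm.~10.4]{Ste08} componentwise are apt and fill in the only points the paper leaves implicit.
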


\section{Numerical Experiments}
\label{sec:numexp}

Before we present our numerical results, we dedicate the next two subsections 
to briefly describe a couple of tools that we will use.

\subsection{Norm approximation}
The error is 
measured in the $H^{1/2}_{0,}(\Sigma)$-norm, which can be represented using 
the modified Hilbert transform $\mathcal{H}_T$ \cite[Section~2.4]{StZ20}: 
\begin{equation*}
\dual{\partial_t u}{\mathcal{H}_T u}_\Sigma = \norm{u}{H^{1/2}_{0,}(\Sigma)}^2, 
\quad \text{for all }u \in  H^{1/2}_{0,}(\Sigma).
\end{equation*}
Computation of the $H^{1/2}_{0,}(\Sigma)$ requires a representation of the 
functions $\calH_T z$, where $z$ is the exact solution of \eqref{eq:BIEK}. As 
these terms are often challenging to retrieve, we approximate the $H^{1/2}_{0,}
(\Sigma)$-norm by interpolating $z$. Since the exact solutions we will consider 
for our experiments are globally continuous, we are allowed to define the 
interpolation of $z$ by using the (standard) nodal interpolation operator $I_h: 
C^0(\Sigma) \to \calS_{0,}^1(\Sigma_h)$.

It is worth pointing out that the expected convergence rate of the Galerkin 
approximation is not affected by the interpolation of the exact solution, and 
therefore this will not affect our numerical results.

\subsection{Exact representation of the unknown density}
\label{ssec:ExRep}

In general, solutions to the indirect methods do not yield densities which can 
be interpreted physically. This limits the possibility to find an exact 
representation of the density, solving \eqref{eq:BIEK}. However, if the 
following assumption holds on the Dirichlet data
\begin{equation}
\label{eq:assumption_Dirichlet_data}
    g(0,t-NL) = g(L,t-(N-1)L), \quad N\in \mathbb{N},\; t\geq 0,
\end{equation}
an exact solution is available. In order to see this, let us first restate the 
considered BIE, assuming a positive addition of the double layer operator
\begin{equation*}
    \left(-\frac{1}{2}\Id + \OK \right) z = g.
\end{equation*}

Next, let us consider the candidate solution $\bar z$ defined as
\begin{equation}
\label{eq:candidate_sol}
    \bar z(x,t) := \begin{cases}
        -2\; g(L,t+L) &x=0\\
        0& x=L.
    \end{cases}
\end{equation}
Then, by \eqref{eq:finalformK}, we have
\begin{equation}
\label{eq:K_and_candidate}
    \OK \bar z := \begin{cases}
        0 & x= 0\\
        g(L,t) & x=L.
    \end{cases}
\end{equation}

Now, the full equation reads
\begin{equation}
\label{eq:expression_K_and_candidate}
    \left(-\frac{1}{2}\Id +\OK \right) \bar z = \begin{cases}
        g(L,t+L) & x=0,\\
        g(L,t) & x=L, 
    \end{cases}
\end{equation}
which, given the assumption \eqref{eq:assumption_Dirichlet_data} results in
\begin{equation}
\label{eq:expression_K_and_candidate_assumption}
    \left(-\frac{1}{2}\Id +\OK \right) \bar z = \begin{cases}
        g(0,t) & x=0,\\
        g(L,t) & x=L.
    \end{cases}
\end{equation}
Hence, $\bar z$ is the unique solution to \eqref{eq:BIEK}.

\subsection{Numerical results}
Our code is implemented in \textsc{Python}. We use exact integration to obtain 
the entries of the Galerkin matrix, while we resort to built-in quadrature 
routines (\textsc{SciPy.integrate.quad)} to compute the right-hand side of the 
system. The resulting linear systems of equations are solved using a built-in 
direct solver (\textsc{NumPy.Linalg.solve)}. 

We point out that it suffices to test the interior problem since the only 
difference between \eqref{eq:BIEK} and \eqref{eq:BIEKex} is the sign of the 
identity, whose discretisation is stable under our assumptions and choice of 
boundary element spaces. We first validate our theoretical results with $\Omega 
:= (0,3)$, $T=6$, and the following exact solutions for \eqref{eq:IBVP}, which 
are also used in \cite{SUZ21}:
\begin{align}
\label{eq:exp1a}
  u_a(x,t) = \begin{cases}
\frac{1}{2}(t-x-2)^3(t-x)^3 & x\leq t \leq 2+x,\\
0 &  \text{else}.
\end{cases}
\end{align} 
 \begin{align}
 \label{eq:exp1b}
  u_b(x,t) = \begin{cases}
\frac{1}{2}\vert \sin(\pi(x-t)) \vert, &\qquad x\leq t,\\
0 & \qquad \text{else}.
\end{cases}
\end{align}

Our Dirichlet data is obtained by taking the lateral trace on either 
\eqref{eq:exp1a} or \eqref{eq:exp1b}. Since in both cases, it satisfies 
assumption \eqref{eq:assumption_Dirichlet_data}, we are able to find an 
expression for the exact solution to the unknown density, following the cue 
of subsection~\ref{ssec:ExRep}.

We measure the corresponding error for each experiment and the condition number 
$\kappa$ for the Galerkin matrix corresponding to $-\frac{1}{2}\Id + \OK$. First 
we consider a set of uniform meshes. 

\subsubsection{Piecewise constant discretisation}

First we consider the numerical approximation in $L^2(\Sigma)$, considering the 
Galerkin formulation \eqref{eq:BIEKdis1d_pwc}. The results are presented in 
Table~\ref{tab:exp_pwc}, where $N$ denotes the number of elements used to 
discretise the lateral boundary $\Sigma_h$ and \emph{eoc} stands for estimated 
order of convergence. Let $z_a$ and $z_b$ be the exact 
solutions to \eqref{eq:BIEK} with Dirichlet data corresponding to \eqref{eq:exp1a} 
and \eqref{eq:exp1b} respectively. Additionally, we denote the obtained numerical 
solutions by $z^0_{a,h}$ and $z^0_{b,h}$. We observe that for both experiments, 
the expected convergence rate is achieved. Fun fact: the resulting condition 
number corresponds \emph{exactly} with the golden ratio squared.
\begin{table}[h]
\centering
\caption{Numerical results for \eqref{eq:BIEK} in $L^2$ using $\calS^0(\Sigma_h)$. 
Given Dirichlet data corresponding to solutions \eqref{eq:exp1a}, and 
\eqref{eq:exp1b} and when using uniform meshes. 
}\label{tab:exp_pwc}
\medskip
 \begin{tabular}{|c | c c | c c | c |}\hline
$N$ & $\norm{z_a-z^0_{a,h}}{L^2(\Sigma)}$ & eoc & 
$\norm{z_b-z^0_{b,h}}{L^2(\Sigma)}$ & eoc & $\kappa$\\\hline
64 & 8.77E-02 & - & 2.70E-01 & - & 2.62E+00 \\
128 & 4.41E-02 & 0.99 & 1.42E-01 & 0.93 & 2.62E+00 \\
256 & 2.21E-02 & 1.0 & 7.22E-02 & 0.97 & 2.62E+00 \\
512 & 1.10E-02 & 1.0 & 3.65E-02 & 0.99 & 2.62E+00 \\
1024 & 5.52E-03 & 1.0 & 1.83E-02 & 0.99 & 2.62E+00 \\
2048 & 2.76E-03 & 1.0 & 9.18E-03 & 1.0 & 2.62E+00 \\
\hline
 \end{tabular} 
 \end{table} 

\subsubsection{Piecewise linear discretisation}\label{sssec:Numeric_pwl}
The results related to \eqref{eq:BIEKdis1d_pwl} are displayed in 
Table~\ref{tab:exp1}, where $N$ denotes the number of elements used to 
discretise the lateral boundary $\Sigma_h$. Like in the piecewise constant 
case, we let $z_a$ and $z_b$ be the exact solutions to \eqref{eq:BIEK} with 
Dirichlet data corresponding to \eqref{eq:exp1a} and \eqref{eq:exp1b} 
respectively. Furthermore, we denote the obtained numerical solutions by 
$z^1_{a,h}$ and $z^1_{b,h}$.  Expected convergence rate is achieved, leading 
to higher or equal rate compared to the piecewise constant discretisation.
\begin{table}[h]
\centering
\caption{Numerical results for \eqref{eq:BIEK} in $H^{1/2}_{0,}(\Sigma)$ using 
$\calS^1_{0,}(\Sigma_h)$. Given Dirichlet data corresponding to solutions 
\eqref{eq:exp1a}, and \eqref{eq:exp1b} and when using uniform meshes. 
}\label{tab:exp1}
\medskip
 \begin{tabular}{|c | c c | c c | c |}\hline
$N$ & $\norm{I_h z_a-z^1_{a,h}}{H^{1/2}_{0,}(\Sigma)}$ & eoc & 
$\norm{I_h z_b-z^1_{b,h}}{H^{1/2}_{0,}(\Sigma)}$ & eoc & $\kappa$\\\hline
64   & 4.34E-02 & -    & 3.91E-01 & -    & 8.09E+00\\
128  & 1.47E-02 & 1.57 & 1.88E-01 & 1.06 & 8.12E+00\\
256  & 5.10E-03 & 1.52 & 9.22E-02 & 1.03 & 8.12E+00\\
512  & 1.79E-03 & 1.51 & 4.57E-02 & 1.01 & 8.13E+00\\
1024 & 6.33E-04 & 1.5  & 2.28E-02 & 1.01 & 8.13E+00 \\
2048 & 2.24E-04 & 1.5  & 1.13E-02 & 1.01 & 8.13E+00\\
\hline
 \end{tabular} 
 \end{table} 

Next, we rerun the experiments on a family of meshes which do not meet
Assumption~\ref{Assumption:mesh}.
In order to achieve this, we consider a terminal 
time $T=2\pi$ and $L=3$, ensuring that on each boundary no uniform refinement 
will lead to vertices at intersections between time-slices. Moreover, the 
boundaries at $x=0$ and $x=L$ are divided into $N_0$ and $N_L$ elements 
respectively, where we impose $N_0 \neq N_L$. Consider an initial mesh with 
$N_0=40$ and $N_L= 24$. We create the family of meshes by uniformly refining 
the initial mesh, which is displayed in Figure~\ref{fig:initial_mesh}. 

\begin{figure}[h]
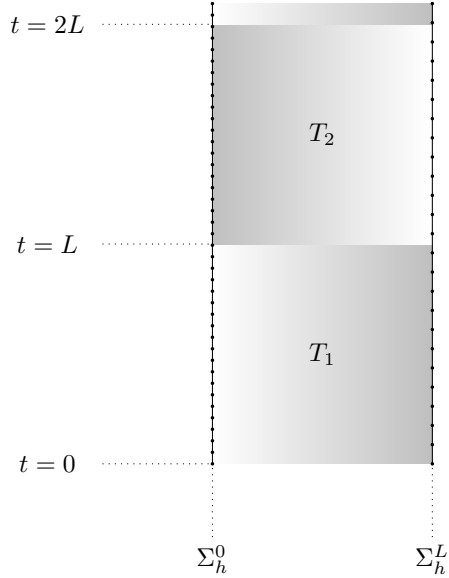

\centering
\includestandalone[width=0.5\textwidth]{images/initial_mesh}%
\caption{Initial non-uniform mesh which does not satisfy assumption $T=nL$ for 
some $n\in\N$.}
\label{fig:initial_mesh}
\end{figure}

The results are given in Table~\ref{tab:exp_non_uni}. We again observe the 
expected convergence rate in spite of the fact that our meshes violate 
Assumption~\ref{Assumption:mesh}.

\begin{table}[H]
\centering
\caption{Numerical results for \eqref{eq:BIEK} given Dirichlet data 
corresponding to solutions \eqref{eq:exp1a} and \eqref{eq:exp1b}, and using a 
family of meshes not meeting Assumption $T=nL$ for 
some $n\in\N$., with $N=N_0+N_L$.  
}\label{tab:exp_non_uni}
\medskip
 \begin{tabular}{|c | c c | c c | c |}\hline
$N$ & $\norm{I_h z_a-z^1_{a,h}}{H^{1/2}_{0,}(\Sigma)}$ & eoc & 
$\norm{I_h z_b-z^1_{b,h}}{H^{1/2}_{0,}(\Sigma)}$ & eoc & $\kappa$\\\hline
64 & 3.26E-02 & - & 3.41E-01 & - & 9.10E+00 \\
128 & 1.12E-02 & 1.55 & 1.80E-01 & 0.93 & 9.96E+00 \\
256 & 3.90E-03 & 1.52 & 8.47E-02 & 1.08 & 1.03E+01 \\
512 & 1.37E-03 & 1.51 & 3.88E-02 & 1.13 & 9.99E+00 \\
1024 & 4.85E-04 & 1.5 & 2.17E-02 & 0.84 & 1.04E+01 \\
2048 & 1.71E-04 & 1.5 & 1.08E-02 & 1.01 & 1.02E+01 \\
\hline
 \end{tabular} 
 \end{table} 

\subsubsection{Discrete inf-sup constant}
In light of the results from Table~\ref{tab:exp_non_uni}, we want to understand 
if the convergence we observed was ``a lucky strike'' or if we have discrete 
stability regardless of Assumption~\ref{Assumption:mesh}. We do this by checking 
numerically the obtained discrete inf-sup constant and whether it remains 
constant with mesh refinement.

We compute the discrete inf-sup constant following the approach in \cite[Rem.~3.159]{Joh10}. In the case of computing the discrete inf-sup constant in $H^{1/2}_{0,}(\Sigma)$, 
we require the following mass matrix
\begin{equation}
\label{eq:dual_inner}
    M_y[i,j]= \inner{\psi_i}{\psi_j}_{[H^{1/2}_{0,}(\Sigma)]^\prime},
\end{equation}
where $\spn \{ \psi_i\}_{i=1}^N = \calS^1_{0,}(\Sigma_h)$. 

Since piecewise linear basis functions are in $L^2(\Sigma)$, we use the 
following equivalence
\begin{equation}
\label{eq:magic}
\inner{\psi_i}{\psi_j}_{[H^{1/2}_{0,}(\Sigma)]^\prime} \equiv -\dual{\calH_T 
\psi_i}{\overline{\partial}_t^{-1} \psi_j}_{\Sigma},
\end{equation}
where $\overline{\partial}_t^{-1} f(s) := -\int_t^T f(s) \;ds$ 
for $f\in L^2(\Sigma)$. The proof of \eqref{eq:magic} can be found in 
Appendix~\ref{app:dualNorm}.

Let $h_0$ and $h_L$ denote the mesh-width of uniformly refined left ($x=0$) and 
right ($x=L$) boundary respectively. As in Section \ref{sssec:Numeric_pwl}, when 
computing the inf-sup constant, we distinguish between the cases where the 
assumption $T=nL$, with $n\in \N$ does or does not hold. More specifically, we 
consider: i) a uniform mesh with $T=nL$ and $h_0 =h_L$; ii) a (non-uniform) 
mesh with $T=n \pi$ and $h_0 \neq h_L$. 

\begin{figure}[h!]
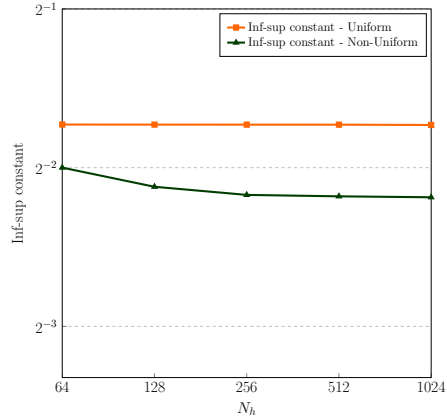

\centering
\includestandalone[width=0.5\textwidth]{images/inf_sup_Tfixed}%
\caption{Discrete inf sup constant for different mesh refinements and two types 
of meshes: the uniform case with $T=6$; 
the non-uniform case with $T=2\pi$ and initial mesh given by Figure 
\ref{fig:initial_mesh}.}
\label{fig:inf_sup_fixedT}
\end{figure}

Figure~\ref{fig:inf_sup_fixedT} displays the discrete inf-sup constant against 
number of mesh elements. There, we set the final time for the uniform case to 
$T=6$, and in the non-uniform case to $T=2\pi$, and increase the level of 
refinement. We see how the discrete inf-sup constant is asymptotically 
independent of the mesh-width, which corroborates that the formulation is 
uniformly discrete inf-sup stable. 

\begin{figure}[h!]
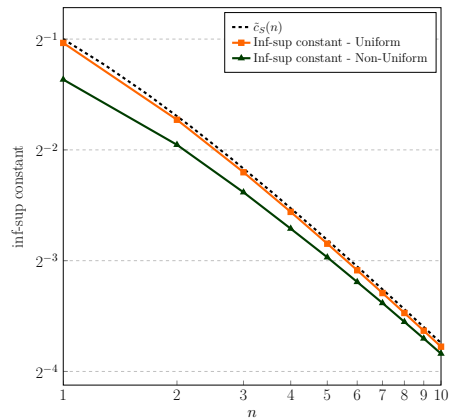

\centering
\includestandalone[width=0.5\textwidth]{images/inf_sup_Tdiff}%
\caption{Discrete inf sup constant for different final times: the uniform case 
with $T=nL$ and $h_0=h_L=3/16$; the non-uniform case with $T=n \pi$ and $h_0 = 
\pi/20$ and $h_L = \pi/12$. Here $n$ is the number of timeslices (and thus 
refinement independent).}
\label{fig:inf_sup_timeslices}
\end{figure}

Next, we study numerically how the discrete inf-sup constant depends on the 
final time $T$. As shown in Figure \ref{fig:inf_sup_timeslices} the discrete 
inf-sup constant decreases when the terminal time $T$ increases, yet it remains 
strictly positive. In case i), the discrete inf-sup constant asymptotically 
corresponds to the inf-sup constant $\tilde c_s(n)=\sin \frac{\pi}{2(2n+1)}$.

\section{Conclusion}
We presented a simple second-kind formulation for the transient wave equation 
in $1d$ that has a proven stable low-order discretisations. We point 
out that this is the first time that such a result is available without having 
to resort to a combined field boundary integral operator. It is also a required 
and insightful first step to achieve the same result for higher 
spatial-dimensions.

For $L^2$ the stability will hold for any conforming discretisation, while 
for $H^{1/2}_{0,}$ we found stable inf-sup pairs. For the proof we required 
Assumption~\ref{Assumption:mesh}, yet we observe in the numerics that this 
is not needed. Ongoing work is showing stability without this mesh 
assumption.

It is important to keep in mind that more work is required to generalise 
the obtained discrete stability to $\Omega\in\R^2$ with $d=2,3$.
In particular, one needs 
to extend the auxiliary Lemma~\ref{lem:Qh_stab_dualspace} to the space 
$\calH_{0,}(\Sigma)$ defined in \cite{StU22}, since $\OK : \calH_{0,}(\Sigma) 
\to \calH_{0,}(\Sigma)$ in higher dimensions; and to study the mapping 
properties of the double layer operator $\OK$, such that the related projections 
remain meaningful. The latter result is dimension dependent as a consequence 
of the fundamental solution being so. For the auxiliary Lemma, however, the 
density of $H^{1/2}_{0,}(\Sigma)$ in $\calH_{0,}(\Sigma)$ is a promising 
starting point.

We are also cautiously optimistic about the extension of Theorem~\ref{thm:L2ellip} 
to higher spatial-dimensions. The reason is that, in its proof, one can see how 
the proposed second kind formulation is 
connected with energetic BEM for the first kind formulation arising from the 
weakly singular operator $\OV$. Indeed, the inf-sup constant $c_s(n)$ agrees 
with the $L^2$-ellipticity constant from \cite[Theorem~2.1]{ADG09}. Hence, the 
fact that energetic BEM is known to work in the case of two spatial-dimensions, 
makes us believe that there is a chance that we can work out the required theory.

Ongoing and future work involves further understanding of the trace spaces 
$\calH_{0,}(\Sigma)$ and the BEM implementation in higher dimensions.

\bibliographystyle{plain}
\bibliography{references}

\appendix
\section{Representation of the Dual Inner Product}
\label{app:dualNorm}
First we recall \cite[p.~9]{SUZ21} for $u,v \in H^{1/2}_{0,}(\Sigma)$
\begin{equation}\label{eq:mht_inner}
    \dual{\calH_T u}{\partial_t v}_\Sigma = \inner{u}{v}_{H^{1/2}_{0,}(\Sigma)}.
\end{equation}

The fact that $\partial_t: H^{1}_{,0}(\Sigma) \to  L^2(\Sigma)$ is an isomorphism 
\cite[p.~6]{SUZ21}, implies the existence of an inverse operator 
$\overline{\partial}_t^{-1}:L^2(\Sigma) \to H^{1}_{,0}(\Sigma)$ which also is 
an isomorphism. In this case, the inverse operator can be expressed as \cite[p.~6]{SUZ21}
\begin{equation}
    \overline{\partial}_t^{-1} f(s) := -\int_t^T f(s) \;ds, \quad 
    \text{ for } f \in L^2(\Sigma).
\end{equation}
Similarly, the isomorphism $\partial_t: H^{1}_{0,}(\Sigma) \to L^2(\Sigma)$ 
has $\partial_t^{-1}:L^2(\Sigma) \to H^{1}_{0,}(\Sigma)$ as its inverse 
operator, which is given by
\begin{equation}
\partial_t^{-1} f(s) := -\int_0^t f(s) \;ds, \quad 
\text{ for } f \in L^2(\Sigma).
\end{equation}

Next, we study the properties of $\overline{\partial}_t^{-1}$ properties in the 
fractional Sobolev spaces under consideration. For this, we need to consider the 
time derivative in the weak sense.

\begin{lemma}
\label{lem:repOmega}
  Let $\omega\in [H^1_{0,}(\Sigma)]^\prime $. Then there exists a unique $v\in L^2(\Sigma)$
  such that 
  \begin{equation}\label{eq:w-partialv}
    -(v,\partial_t u)_{L^2(\Sigma)} =
   \dual{\omega}{u}_{\Sigma} \qquad
    \forall u\in H^1_{0,}(\Sigma),
  \end{equation}
  and
  \begin{equation}\label{eq: representation w}
    \|\omega\|^2_{[H^1_{0,}(\Sigma)]^\prime } = \|v\|_{L^2(\Sigma)}^2.
  \end{equation}
\end{lemma}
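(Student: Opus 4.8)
The plan is to identify $v$ as the Riesz representative of $\omega$ under a suitable inner product and then show that the defining relation can be rewritten in the form \eqref{eq:w-partialv}. First I would recall from \cite[p.~6]{SUZ21} that $\partial_t : H^1_{0,}(\Sigma) \to L^2(\Sigma)$ is an isomorphism, so that the bilinear form $(\partial_t u, \partial_t u')_{L^2(\Sigma)}$ is an equivalent inner product on $H^1_{0,}(\Sigma)$; in fact, by the definition of the norm on $H^1_{0,}(\Sigma)$ used in this paper, it \emph{is} the inner product (up to the conventions in use). By the Riesz representation theorem applied to $\omega\in [H^1_{0,}(\Sigma)]^\prime$, there is a unique $\phi\in H^1_{0,}(\Sigma)$ with
\begin{equation*}
  \dual{\omega}{u}_\Sigma = (\partial_t\phi,\partial_t u)_{L^2(\Sigma)}, \qquad \forall u\in H^1_{0,}(\Sigma),
\end{equation*}
and $\|\omega\|_{[H^1_{0,}(\Sigma)]^\prime} = \|\partial_t\phi\|_{L^2(\Sigma)}$.

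Next I would set $v := -\partial_t\phi \in L^2(\Sigma)$. Substituting into the Riesz identity immediately gives $-(v,\partial_t u)_{L^2(\Sigma)} = \dual{\omega}{u}_\Sigma$ for all $u\in H^1_{0,}(\Sigma)$, which is \eqref{eq:w-partialv}, and $\|\omega\|^2_{[H^1_{0,}(\Sigma)]^\prime} = \|\partial_t\phi\|^2_{L^2(\Sigma)} = \|v\|^2_{L^2(\Sigma)}$, which is \eqref{eq: representation w}. For uniqueness of $v$ in $L^2(\Sigma)$: if $v_1,v_2$ both satisfy \eqref{eq:w-partialv}, then $(v_1-v_2,\partial_t u)_{L^2(\Sigma)} = 0$ for all $u\in H^1_{0,}(\Sigma)$; since $\partial_t$ maps $H^1_{0,}(\Sigma)$ \emph{onto} $L^2(\Sigma)$, the test functions $\partial_t u$ exhaust $L^2(\Sigma)$, forcing $v_1 = v_2$.

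The main obstacle — really the only subtle point — is making sure the Riesz step is applied with the correct inner product and that surjectivity of $\partial_t$ is invoked where needed. One must be careful that $H^1_{0,}(\Sigma)$ is equipped precisely with the seminorm $\|\partial_t\cdot\|_{L^2(\Sigma)}$, which is a genuine norm here thanks to the zero initial condition (Poincaré-type inequality in time); this is exactly what the cited isomorphism property encodes. The surjectivity of $\partial_t$ is what guarantees both that the Riesz representative produces a $v$ of the stated form and that $v$ is unique in $L^2(\Sigma)$ rather than only modulo the annihilator of $\range(\partial_t)$. Everything else is a direct substitution, so I would keep the write-up short: state the Riesz identification, define $v=-\partial_t\phi$, verify \eqref{eq:w-partialv} and \eqref{eq: representation w}, and close with the one-line uniqueness argument using surjectivity.
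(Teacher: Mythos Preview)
Your argument is correct. The paper takes a slightly different route: it invokes Babu\v{s}ka--Ne\v{c}as theory for the mixed bilinear form $b(v,u)=-(v,\partial_t u)_{L^2(\Sigma)}$ to obtain existence and uniqueness of $v$, and then establishes the norm identity \eqref{eq: representation w} by two separate estimates, the reverse inequality coming from the explicit choice $\bar u(t)=\int_0^t v(s)\,ds$. Your approach compresses all of this into a single application of the Riesz representation theorem on $H^1_{0,}(\Sigma)$ equipped with the inner product $(\partial_t\cdot,\partial_t\cdot)_{L^2(\Sigma)}$, which yields existence, the norm identity, and (after invoking surjectivity of $\partial_t$) uniqueness of $v$ in one stroke. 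Both arguments ultimately rest on the same fact---that $\partial_t:H^1_{0,}(\Sigma)\to L^2(\Sigma)$ is an isometric isomorphism---so the difference is one of packaging rather than substance; your version is a little more economical, while the paper's Babu\v{s}ka--Ne\v{c}as framing makes the interpretation of $v\mapsto\omega$ as the weak time derivative $\partial_t:L^2(\Sigma)\to[H^1_{0,}(\Sigma)]^\prime$ more explicit.
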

\begin{proof}
First, one can show that \eqref{eq:w-partialv} admits a unique solution 
$v\in L^2(0,T)$ by using the Babu\u{s}ka--Ne\v{c}as theory. This implies 
that $\partial_t : L^2(0,T) \to [H^1_{0,}(0,T)]^\prime $ is an isomorphism.
Then, we can derive the following
\begin{eqnarray*}
\norm{\omega}{[H^1_{0,}(0,T)]^\prime }
& = & \sup_{u\in H^1_{0,}(0,T)\setminus\lbrace0\rbrace}
    \frac{\abs{\dual{\omega}{u}_{(0,T)}}}{\norm{\partial_t u}{L^2(0,T)}} \\
& = & \sup_{u\in H^1_{0,}(0,T)\setminus\lbrace0\rbrace}
        \frac{\abs{-(v,\partial_t u)_{L^2(\Sigma)}}}
    {\norm{\partial_t u}{L^2(0,T)}} \leq\norm{v}{L^2(0,T)}.
  \end{eqnarray*}
  
Next, let us, consider $\bar u(t) = \int_0^t v(s)\, ds$, which satisfies 
$\partial_t \bar u = v$ in $L^2(0,T)$. With this, we arrive to 
\begin{equation*}
\norm{v}{L^2(0,T)} =
\frac{\abs{-(v,\partial_t \bar u)_{L^2(\Sigma)}}}
{\norm{\partial_t \bar u}{L^2(0,T)}} \leq
\sup_{u\in H^1_{0,}(0,T)\setminus\lbrace0\rbrace}
        \frac{\abs{-(v,\partial_t u)_{L^2(\Sigma)}}}
    {\norm{\partial_t u}{L^2(0,T)}} = \norm{\omega}{[H^1_{0,}(0,T)]^\prime }.
\end{equation*}
\end{proof}

Lemma~\ref{lem:repOmega} implies the existence of the inverse of the 
(weak) time derivative, i.e., $\overline{\partial}_t^{-1} : [H^1_{0,}(0,T)
]^\prime 
\to L^2(0,T)$, which is also an isomorphism. As discussed earlier, we also have 
$\overline{\partial}_t^{-1}:L^2(\Sigma) \to H^{1}_{,0}(\Sigma)$. Hence, by 
interpolation, we conclude that $\overline{\partial}_t^{-1} : [H^{1/2}_{0,}(0,T
)]^\prime \to H^{1/2}_{,0}(\Sigma)$ is an isomorphism as well. Moreover, it is 
also an isometry between these spaces.

Another important ingredient in this Appendix is the modified Hilbert transform 
$\calH_T$ introduced in \cite[Section~2.4]{StZ20}. We point out  that it can be 
seen as a mapping $\calH_T: H^{1}_{0,}(\Sigma)\to H^{1}_{,0}(\Sigma)$ or as a 
mapping $\calH_T: H^{1/2}_{0,}(\Sigma)\to H^{1/2}_{,0}(\Sigma)$, which satisfy
\begin{equation}
 \norm{u}{H^{1}_{0,}(\Sigma)}=\norm{\calH_T u}{H^{1}_{,0}(\Sigma)}, 
 \quad \text{ and } \quad 
 \norm{u}{H^{1/2}_{0,}(\Sigma)}=\norm{\calH_T u}{H^{1/2}_{,0}(\Sigma)}.
\end{equation}
Similarly, we have the inverse modified Hilbert transform that induces the 
mappings $\calH_T^{-1}: H^{1}_{,0}(\Sigma)\to H^{1}_{0,}(\Sigma)$ and 
$\calH_T^{-1}: H^{1/2}_{,0}(\Sigma)\to H^{1/2}_{0,}(\Sigma)$, which verify
\begin{equation}
 \norm{u}{H^{1}_{,0}(\Sigma)}=\norm{\calH_T^{-1} u}{H^{1}_{0,}(\Sigma)}, 
 \quad \text{ and } \quad 
 \norm{u}{H^{1/2}_{,0}(\Sigma)}=\norm{\calH_T^{-1} u}{H^{1/2}_{0,}(\Sigma)}.
\end{equation}

Now we are in the position to state the main result of this Appendix:
\begin{proposition}
\label{prop:dualNorm}
The following representation of the $[H^{1/2}_{0,}(\Sigma)
]^\prime$ inner product holds:
\begin{equation}\label{eq:equivalent_inner}
    \inner{\omega}{\eta}_{[H^{1/2}_{0,}(\Sigma)]^\prime} \equiv 
    \dual{\overline{\partial}_t^{-1} \omega}{\overline{\partial}_t \calH_T^{-1}
    \partial_t^{-1}\eta}_\Sigma 
\end{equation}
for $\omega,\eta \in [H^{1/2}_{0,}(\Sigma)]^\prime$.
Moreover, when $u,v \in L^2(\Sigma)$ (as piecewise linear functions do), we can further simplify this expression to
\begin{equation}\label{eq:equivalent_innerL2}
    \inner{u}{v}_{[H^{1/2}_{0,}(\Sigma)]^\prime} \equiv 
    -\dual{\calH_T u}{\overline{\partial}_t^{-1} v}_\Sigma.
\end{equation}
\end{proposition}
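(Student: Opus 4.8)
\textbf{Proof proposal for Proposition~\ref{prop:dualNorm}.}

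The plan is to derive \eqref{eq:equivalent_inner} directly from the chain of isometric isomorphisms assembled earlier in this Appendix, and then to specialise to \eqref{eq:equivalent_innerL2} when the arguments happen to lie in $L^2(\Sigma)$. First I would fix the defining relation to work from: by \eqref{eq:mht_inner} the $H^{1/2}_{0,}(\Sigma)$ inner product is $\inner{u}{v}_{H^{1/2}_{0,}(\Sigma)} = \dual{\calH_T u}{\partial_t v}_\Sigma$ for $u,v \in H^{1/2}_{0,}(\Sigma)$. The standard way to transport an inner product to the dual space is via the Riesz isomorphism: given $\omega,\eta \in [H^{1/2}_{0,}(\Sigma)]^\prime$, write $\omega = R u_\omega$ and $\eta = R u_\eta$ for the Riesz map $R$ associated with the $H^{1/2}_{0,}(\Sigma)$ inner product, and set $\inner{\omega}{\eta}_{[H^{1/2}_{0,}(\Sigma)]^\prime} = \inner{u_\omega}{u_\eta}_{H^{1/2}_{0,}(\Sigma)}$. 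So the real task is to identify $R^{-1}$ explicitly as a composition of the operators $\overline{\partial}_t^{-1}$, $\partial_t^{-1}$, $\calH_T^{-1}$ already shown to be isomorphisms between the relevant spaces.

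The key computation is to check that $u_\eta := \calH_T^{-1}\partial_t^{-1}\eta$ is the Riesz representative of $\eta$, i.e. that $\dual{\eta}{w}_\Sigma = \inner{u_\eta}{w}_{H^{1/2}_{0,}(\Sigma)}$ for all $w \in H^{1/2}_{0,}(\Sigma)$; this uses that $\partial_t: H^{1/2}_{0,}(\Sigma)\to [H^{1/2}_{,0}(\Sigma)]^\prime$ (so $\partial_t^{-1}\eta$ makes sense and lands in $H^{1/2}_{,0}(\Sigma)$), that $\calH_T^{-1}: H^{1/2}_{,0}(\Sigma)\to H^{1/2}_{0,}(\Sigma)$, and then the identity \eqref{eq:mht_inner} together with the duality $\dual{\partial_t a}{b}_\Sigma = -\dual{a}{\partial_t b}_\Sigma$ for $a,b$ in the appropriate trace spaces with the vanishing endpoint conditions. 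Having identified the representative, I would then rewrite $\inner{u_\omega}{u_\eta}_{H^{1/2}_{0,}(\Sigma)}$ using \eqref{eq:mht_inner} once more and peel off one $\overline{\partial}_t$/$\overline{\partial}_t^{-1}$ pair so that the $\omega$-side appears as $\overline{\partial}_t^{-1}\omega$ (matching the isometry $\overline{\partial}_t^{-1}:[H^{1/2}_{0,}(\Sigma)]^\prime \to H^{1/2}_{,0}(\Sigma)$ recorded just above), which produces exactly the right-hand side of \eqref{eq:equivalent_inner}. Throughout I would argue first on the dense, smooth level (or on $H^1$-type arguments) where all the integrations by parts are classical, and then extend by continuity/density using that every operator in sight is bounded between the stated spaces.

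For \eqref{eq:equivalent_innerL2}, specialise to $u,v \in L^2(\Sigma) \subset [H^{1/2}_{0,}(\Sigma)]^\prime$. Now $\partial_t^{-1}v$ can be replaced by the explicit antiderivative $-\int_0^t v\,ds \in H^1_{0,}(\Sigma)$, and $\overline{\partial}_t \calH_T^{-1}\partial_t^{-1}v$ collapses: applying $\calH_T^{-1}: H^1_{0,}(\Sigma)\to H^1_{,0}(\Sigma)$ and then $\overline{\partial}_t: H^1_{,0}(\Sigma)\to L^2(\Sigma)$, and using the commutation of $\calH_T$ with $\partial_t$ that underlies \eqref{eq:mht_inner}, the composition simplifies so that $\overline{\partial}_t^{-1}$ is transferred onto the other factor; matching this against the definition $\overline{\partial}_t^{-1}f(s) = -\int_t^T f\,ds$ yields the factor $-\dual{\calH_T u}{\overline{\partial}_t^{-1}v}_\Sigma$. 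I expect the main obstacle to be bookkeeping the boundary terms when integrating by parts in time: one must be careful which factor carries the zero-initial ($t=0$) versus zero-final ($t=T$) condition so that $\calH_T$ is applied in the correct direction and no spurious endpoint contributions survive. Getting the single overall minus sign right in \eqref{eq:equivalent_innerL2} is the concrete manifestation of this, and is where I would slow down and check the orientation of each $\partial_t$/$\overline{\partial}_t$ and of $\calH_T$ against its inverse.
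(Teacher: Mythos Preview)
Your approach is essentially the paper's, only reorganised: the paper shows that the bilinear form induces the $[H^{1/2}_{0,}(\Sigma)]^\prime$-norm via the isometric isomorphism $\calH_T^{-1}\overline{\partial}_t^{-1}:[H^{1/2}_{0,}(\Sigma)]^\prime\to H^{1/2}_{0,}(\Sigma)$ together with \eqref{eq:mht_inner}, then checks symmetry separately and invokes polarization, whereas you identify this same composite as the inverse Riesz map and pull back the inner product in one stroke. Both routes use the same isometries and the same commutation of $\calH_T$ with $\partial_t$ (the paper isolates this as Lemma~\ref{lem:newlem23}) for the $L^2$ simplification; your framing is arguably more direct since it bypasses the polarization step, while the paper's has the advantage that one never needs to verify the Riesz-representative identity against all test functions.

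One concrete slip to fix: you write that $\partial_t: H^{1/2}_{0,}(\Sigma)\to [H^{1/2}_{,0}(\Sigma)]^\prime$, so that $\partial_t^{-1}\eta$ lands in $H^{1/2}_{,0}(\Sigma)$. But $\eta$ lives in $[H^{1/2}_{0,}(\Sigma)]^\prime$, not $[H^{1/2}_{,0}(\Sigma)]^\prime$, so this inverse does not apply. The map you actually need is the one the Appendix establishes just before the proposition, namely $\overline{\partial}_t^{-1}:[H^{1/2}_{0,}(\Sigma)]^\prime\to H^{1/2}_{,0}(\Sigma)$ (coming from $\partial_t: H^{1/2}_{,0}(\Sigma)\to [H^{1/2}_{0,}(\Sigma)]^\prime$ and Lemma~\ref{lem:repOmega} by interpolation). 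With this correction your Riesz representative becomes $\calH_T^{-1}\overline{\partial}_t^{-1}\eta$, which is exactly the isometry the paper uses. This is precisely the endpoint-condition bookkeeping you yourself flagged as the main obstacle, so once you straighten out which factor carries $t=0$ versus $t=T$ the rest of your argument goes through.
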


Next, we introduce some intermediate results in order to proceed to prove 
Proposition~\ref{prop:dualNorm}. We begin by the following auxiliary lemma, 
which is closely related to \cite[Lem.~2.3]{StZ20}.
\begin{lemma}
\label{lem:newlem23}
For $u\in H^{1/2}_{,0}(\Sigma)$ we have
\begin{equation*}
    \dual{\partial_t \calH_T^{-1}u}{v}_\Sigma = -\dual{\calH_T 
    \partial_t u}{v}_\Sigma, \quad \text{for all }v\in H^{1/2}_{,0}(\Sigma).
\end{equation*}
\end{lemma}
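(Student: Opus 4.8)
The plan is to derive the identity by chaining together three facts already at our disposal. First, the cross-space adjointness of the modified Hilbert transform, $\dual{\calH_T \phi}{w}_\Sigma = \dual{\phi}{\calH_T^{-1} w}_\Sigma$ for $w \in H^{1/2}_{,0}(\Sigma)$ and $\phi$ in the appropriate dual space; this is just the statement that the bounded isomorphism $\calH_T$ extends to the whole Sobolev scale and that its extension to the dual scale is $(\calH_T^{-1})^\prime$, which is the natural way to make sense of $\calH_T \partial_t u$ when $u \in H^{1/2}_{,0}(\Sigma)$ (so that $\partial_t u$ lives only in $[H^{1/2}_{0,}(\Sigma)]^\prime$). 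Second, the integration-by-parts formula $\dual{\partial_t f}{g}_\Sigma = -\dual{f}{\partial_t g}_\Sigma$ for $f \in H^{1/2}_{,0}(\Sigma)$ and $g \in H^{1/2}_{0,}(\Sigma)$, obtained from the classical formula on $H^1$-functions — the boundary terms vanish because $f$ vanishes at $t=T$ and $g$ at $t=0$ — together with a density argument using the mapping properties of $\partial_t$. Third, the representation $\dual{\calH_T p}{\partial_t q}_\Sigma = \inner{p}{q}_{H^{1/2}_{0,}(\Sigma)}$ from \eqref{eq:mht_inner}, valid for $p,q \in H^{1/2}_{0,}(\Sigma)$, and the symmetry of that inner product. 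Since Lemma~\ref{lem:newlem23} is precisely the $H^{1/2}$-level counterpart of \cite[Lem.~2.3]{StZ20}, the computation runs along the same lines.

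Concretely, I would fix $u,v \in H^{1/2}_{,0}(\Sigma)$ and set $a := \calH_T^{-1} u$, $b := \calH_T^{-1} v$, so that $a,b \in H^{1/2}_{0,}(\Sigma)$ with $u = \calH_T a$ and $v = \calH_T b$. For the right-hand side,
\[
\dual{\calH_T \partial_t u}{v}_\Sigma
 = \dual{\partial_t u}{\calH_T^{-1} v}_\Sigma
 = \dual{\partial_t u}{b}_\Sigma
 = -\dual{u}{\partial_t b}_\Sigma
 = -\dual{\calH_T a}{\partial_t b}_\Sigma
 = -\inner{a}{b}_{H^{1/2}_{0,}(\Sigma)},
\]
where the first equality is the adjointness relation, the third is integration by parts (licit since $u \in H^{1/2}_{,0}(\Sigma)$ and $b \in H^{1/2}_{0,}(\Sigma)$), and the last is \eqref{eq:mht_inner} applied to $a,b \in H^{1/2}_{0,}(\Sigma)$. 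For the left-hand side,
\[
\dual{\partial_t \calH_T^{-1} u}{v}_\Sigma
 = \dual{\partial_t a}{\calH_T b}_\Sigma
 = \dual{\calH_T b}{\partial_t a}_\Sigma
 = \inner{b}{a}_{H^{1/2}_{0,}(\Sigma)},
\]
again by \eqref{eq:mht_inner}. Symmetry of $\inner{\cdot}{\cdot}_{H^{1/2}_{0,}(\Sigma)}$ then gives $\dual{\partial_t \calH_T^{-1} u}{v}_\Sigma = \inner{a}{b}_{H^{1/2}_{0,}(\Sigma)} = -\dual{\calH_T \partial_t u}{v}_\Sigma$, which is the claim.

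The main difficulty — more a matter of care than of substance — lies in making the first two ingredients precise at the $H^{1/2}$-regularity level: one has to set up $\calH_T$ on the negative-order space $[H^{1/2}_{0,}(\Sigma)]^\prime$ by duality from $\calH_T^{-1}:H^{1/2}_{,0}(\Sigma)\to H^{1/2}_{0,}(\Sigma)$, and justify the integration-by-parts identity by extending it from the dense subspace $H^1_{,0}(\Sigma)$ using the boundedness of $\partial_t$ between the relevant spaces. A fully self-contained alternative is to first prove the operator identity $\partial_t \calH_T^{-1} = -\calH_T \partial_t$ on $H^1_{,0}(\Sigma)$ directly from the spectral representation of $\calH_T$ in \cite[Sect.~2.4]{StZ20} — here one uses that $\calH_T$ sends $\sin(\omega_k t) \mapsto \cos(\omega_k t)$, so that both sides act as $\cos(\omega_k t) \mapsto \omega_k \cos(\omega_k t)$ on the underlying basis functions — and then extend it to $H^{1/2}_{,0}(\Sigma)$ by density, after which testing against $v$ gives the lemma. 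In either route one should double-check the sign and normalisation conventions for $\calH_T$ used in \cite{StZ20,SUZ21} so that the eigenvalue computation and the adjointness relation carry the signs asserted here.
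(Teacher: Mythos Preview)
Your proposal is correct. The paper, however, takes exactly the route you sketch as your \emph{alternative}: it fixes a smooth $\varphi$ with $\varphi(T)=0$, expands it in the cosine basis $\cos(\omega_k t)$, and computes $\partial_t \calH_T^{-1}\varphi$ and $-\calH_T \partial_t \varphi$ term-by-term, finding both equal to $\tfrac{1}{T}\sum_k \varphi_k\,\omega_k\cos(\omega_k t)$; the identity on $H^{1/2}_{,0}(\Sigma)$ then follows by completion. Your primary argument --- pushing everything through the isomorphism $\calH_T^{-1}$ to land in $H^{1/2}_{0,}(\Sigma)$ and then invoking \eqref{eq:mht_inner} plus symmetry of the inner product --- is a genuinely different and rather elegant route: it never touches a Fourier coefficient and shows that the lemma is really a restatement of the symmetry of $\inner{\cdot}{\cdot}_{H^{1/2}_{0,}(\Sigma)}$. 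The price is the bookkeeping you flag yourself (extending $\calH_T$ to $[H^{1/2}_{0,}(\Sigma)]^\prime$ by duality and justifying integration by parts at the $H^{1/2}$ level), whereas the paper's spectral computation is self-contained and requires no such scaffolding. Either way the result follows; the paper's choice is the more economical one given the tools already set up.
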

\proof{The proof works analogously to Lemma~2.3 in \cite{StZ20} but we include 
it for completeness. Consider a function $\varphi \in C^\infty(0,T)$ with 
$\varphi(T)=0$. We remind the reader that it can be expanded as follows
\begin{equation*}
\varphi(t) = \sum_{k=0}^\infty \varphi_k \cos\left[ \left(\frac{\pi}{2}
+k\pi\right)\frac{t}{T}\right],
\end{equation*}
where 
\begin{equation*}
\varphi_k := \frac{2}{T}\int_0^T \varphi(t) \cos\left[ \left(\frac{\pi}{2}+k\pi
\right)\frac{t}{T}\right]\; dt.
\end{equation*}
By definition of the inverse 
modified Hilbert transform $\calH_T^{-1}$, we have that
\begin{equation*}
\calH_T^{-1} \varphi(t) = \sum_{k=0}^\infty \varphi_k \sin\left[ \left(\frac{
\pi}{2}+k\pi\right)\frac{t}{T}\right],
\end{equation*}

From the above expression it becomes clear that
\begin{equation}
\partial_t \calH_T^{-1} \varphi(t) = \frac{1}{T}\sum_{k=0}^\infty \varphi_k 
\cos\left[ \left(\frac{\pi}{2}+k\pi\right)\frac{t}{T}\right] 
\left(\frac{\pi}{2}+k\pi\right).
\end{equation}

Additionally, we have that 
\begin{equation}
\partial_t \varphi = -\frac{1}{T}\sum_{k=0}^\infty \varphi_k \sin\left[
\left(\frac{\pi}{2}+k\pi\right)\frac{t}{T}\right] \left(\frac{\pi}{2}+k\pi
\right).
\end{equation}
Hence, by the definition of the modified Hilbert transform, we get
\begin{equation}
-\calH_T \partial_t \varphi = \frac{1}{T}\sum_{k=0}^\infty u_k \cos\left[ 
\left(\frac{\pi}{2}+k\pi\right)\frac{t}{T}\right] \left(\frac{\pi}{2}
+k\pi\right) = \partial_t \calH_T^{-1} \varphi,
\end{equation}
for all $\varphi \in C^\infty(0,T)$ with $\varphi(T)=0$. Finally,
the result follows by completion.\\\qed}

Similarly, we can prove
\begin{lemma}
\label{lem:HTder}
For $u\in H^{1}_{,0}(\Sigma)$ we have
\begin{equation*}
    \dual{\partial_t \calH_T^{-1}u}{v}_\Sigma = -\dual{\calH_T 
    \partial_t u}{v}_\Sigma, \quad \text{for all }v\in H^{1}_{,0}(\Sigma).
\end{equation*}
\end{lemma}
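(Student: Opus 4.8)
The plan is to follow exactly the route used for Lemma~\ref{lem:newlem23}: first establish the pointwise identity on a dense class of smooth functions, and then extend it to all of $H^1_{,0}(\Sigma)$ by continuity. In fact the cosine/sine expansion carried out in the proof of Lemma~\ref{lem:newlem23} already shows, componentwise on $\Sigma$, that $\partial_t\calH_T^{-1}\varphi=-\calH_T\partial_t\varphi$ for every smooth $\varphi$ on $(0,T)$ with $\varphi(T)=0$; nothing in that computation is specific to the $H^{1/2}$ setting, so it can be reused verbatim here. Since such smooth functions are dense in $H^1_{,0}(0,T)$, the corresponding product space is dense in $H^1_{,0}(\Sigma)$, and it remains only to pass to the limit.

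For that I would check that both sides depend continuously on $u\in H^1_{,0}(\Sigma)$ with values in $L^2(\Sigma)$. On one side, $\calH_T^{-1}:H^1_{,0}(\Sigma)\to H^1_{0,}(\Sigma)$ is an isometry and $\partial_t:H^1_{0,}(\Sigma)\to L^2(\Sigma)$ is an isomorphism, so $\partial_t\calH_T^{-1}:H^1_{,0}(\Sigma)\to L^2(\Sigma)$ is bounded. On the other side, $\partial_t:H^1_{,0}(\Sigma)\to L^2(\Sigma)$ is bounded, and $\calH_T$ extends to a bounded (indeed unitary) operator on $L^2(\Sigma)$ by its series representation \cite[Section~2.4]{StZ20}, since it maps the orthogonal basis $\{\cos[(\pi/2+k\pi)t/T]\}_k$ of $L^2(0,T)$ onto the orthogonal basis $\{\sin[(\pi/2+k\pi)t/T]\}_k$; hence $\calH_T\partial_t:H^1_{,0}(\Sigma)\to L^2(\Sigma)$ is bounded as well. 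Consequently the two $L^2(\Sigma)$-valued maps $u\mapsto\partial_t\calH_T^{-1}u$ and $u\mapsto-\calH_T\partial_t u$ agree on a dense subset of $H^1_{,0}(\Sigma)$, hence everywhere: $\partial_t\calH_T^{-1}u=-\calH_T\partial_t u$ in $L^2(\Sigma)$ for all $u\in H^1_{,0}(\Sigma)$. Testing this $L^2$-identity against any $v\in H^1_{,0}(\Sigma)\subset L^2(\Sigma)$ through the $L^2$-extended duality pairing $\dual{\cdot}{\cdot}_\Sigma$ yields $\dual{\partial_t\calH_T^{-1}u}{v}_\Sigma=-\dual{\calH_T\partial_t u}{v}_\Sigma$, which is the claim.

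I do not anticipate a genuine obstacle. The only point that deserves a word of care is that $\calH_T$ is used here as a bounded operator on $L^2(\Sigma)$, which is slightly more than the isometries $\calH_T:H^1_{0,}(\Sigma)\to H^1_{,0}(\Sigma)$ and $\calH_T:H^{1/2}_{0,}(\Sigma)\to H^{1/2}_{,0}(\Sigma)$ recalled above, but it is immediate from the spectral definition of $\calH_T$. Beyond that, the argument is the routine "dense subset plus continuity" completion already invoked at the end of the proof of Lemma~\ref{lem:newlem23}.
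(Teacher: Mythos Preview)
Your proposal is correct and follows essentially the same route as the paper. The paper simply states ``Similarly, we can prove'' after Lemma~\ref{lem:newlem23}, meaning that the same cosine/sine expansion on smooth $\varphi$ with $\varphi(T)=0$ followed by completion is intended; your write-up makes the density/continuity step more explicit (in particular the $L^2$-boundedness of $\calH_T$), but the underlying argument is identical.
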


Finally, we conclude this Appendix by proving its main result.
\proof[Proof of Proposition~\ref{prop:dualNorm}]{
We proceed by showing that: (i) the introduced bilinear forms can be used to 
represent the $[H^{1/2}_{0,}(\Sigma)]^\prime$-norm (i.e., that they are 
continuous and elliptic); and (ii) they are symmetric. With these two pieces, 
and by using the polarization identity on Hilbert spaces, we conclude the 
desired inner-product representations.\\

\noindent\underline{\emph{Step 1: The bilinear form induces the $[H^{1/2}_{0,}(\Sigma)]^\prime$ norm.}}\\ 
First, we point out that, by composition, $\calH_T^{-1}
\overline{\partial}_t^{-1}: [H^{1/2}_{0,}(\Sigma)]^\prime \to H^{1/2}_{0,}(\Sigma)$ 
is an isometric isomorphism, hence 
\begin{equation}
\norm{\calH_T^{-1}\overline{\partial}_t^{-1}\omega}{H^{1/2}_{0,}(\Sigma)}
= \norm{\omega}{ [H^{1/2}_{0,}(\Sigma)]^\prime}.
\end{equation}

Furthermore, by \eqref{eq:mht_inner}, we have that for $\omega \in [H^{1/2}_{0,
}(\Sigma)]^\prime$
\begin{align*} 
\norm{\calH_T^{-1}\overline{\partial}_t^{-1}\omega}{ H^{1/2}_{0,}(\Sigma) }^2 
&= \dual{\calH_T \calH_T^{-1}\overline{\partial}_t^{-1} \omega}{\partial_t 
\calH_T^{-1}\overline{\partial}_t^{-1} \omega}_\Sigma\\ &= 
\dual{\overline{\partial}_t^{-1} \omega}{\partial_t \calH_T^{-1}\overline{
\partial}_t^{-1}\omega}_\Sigma, 
\end{align*}

which proves that the bilinear form on the right hand side of 
\eqref{eq:equivalent_inner} does indeed represent the $[H^{1/2}_{0,}(\Sigma)
]^\prime$-norm. 

Finally, for $u \in L^2(\Sigma)$, we can use Lemma~\ref{lem:newlem23} and get
\begin{align*}
\norm{\calH_T^{-1}\overline{\partial}_t^{-1}u}{ H^{1/2}_{0,}(\Sigma) }^2 &=
\dual{\overline{\partial}_t^{-1} u}{\partial_t \calH_T^{-1}\overline{\partial
}_t^{-1}u}_\Sigma = -\dual{\overline{\partial}_t^{-1} u}{\calH_T u}_\Sigma,
\end{align*}
from where we conclude that the bilinear form on the right hand side of 
\eqref{eq:equivalent_innerL2} induces the $[H^{1/2}_{0,}(\Sigma)]^\prime$-norm.\\

\noindent\underline{\emph{Step 2: The obtained bilinear forms are symmetric.}}\\ 
From \cite[Corollary~2.5]{StZ20}, we know that
\begin{align*}
\dual{\calH_T w}{\partial_t z}_\Sigma = \dual{\partial_t w}{\calH_T z}_\Sigma, 
\quad \forall w,z \in H^{1/2}_{0,}(\Sigma).
\end{align*}
Hence
\begin{align*}
\dual{\overline{\partial}_t^{-1} \omega}{\partial_t \calH_T^{-1}\overline{
\partial}_t^{-1}\eta}_\Sigma &=
\dual{\calH_T (\calH_T^{-1}\overline{\partial}_t^{-1} \omega)}{\partial_t 
(\calH_T^{-1}\overline{\partial}_t^{-1} \eta)}_\Sigma \\
&=  \dual{\partial_t (\calH_T^{-1}\overline{\partial}_t^{-1} \omega)}{\calH_T 
(\calH_T^{-1}\overline{\partial}_t^{-1} \eta)}_\Sigma\\
&= \dual{\partial_t \calH_T^{-1}\overline{\partial}_t^{-1} \omega}{\overline{
\partial}_t^{-1} \eta}_\Sigma,
\end{align*}
for $\omega, \eta \in [H^{1/2}_{0,}(\Sigma)]^\prime$, and 
\begin{align*}
-\dual{\overline{\partial}_t^{-1} u}{\calH_T v}_\Sigma = -\dual{\calH_T u}{\overline{\partial}_t^{-1} v}_\Sigma, \quad \forall u,v \in L^2(\Sigma).
\end{align*}
\qed
}

\end{document}